\setlist[enumerate,1]{leftmargin=*}
\setlist[itemize]{leftmargin=1em}
\theoremstyle{plain}
\newtheorem{theorem}{Theorem}[section]
\newaliascnt{lemma}{theorem}
\newtheorem{lemma}[lemma]{Lemma}
\newaliascnt{proposition}{theorem}
\newtheorem{proposition}[proposition]{Proposition}
\newaliascnt{corollary}{theorem}
\newtheorem{corollary}[corollary]{Corollary}
\theoremstyle{definition}
\newaliascnt{definition}{theorem}
\newtheorem{definition}[definition]{Definition}
\newaliascnt{example}{theorem}
\theoremstyle{remark}
\newaliascnt{remark}{theorem}
\newaliascnt{question}{theorem}
\newaliascnt{conjecture}{theorem}
\crefname{theorem}{Theorem}{Theorems}       \Crefname{theorem}{Theorem}{Theorems}
\crefname{lemma}{Lemma}{Lemmas}             \Crefname{lemma}{Lemma}{Lemmas}
\crefname{proposition}{Proposition}{Propositions} \Crefname{proposition}{Proposition}{Propositions}
\crefname{corollary}{Corollary}{Corollaries} \Crefname{corollary}{Corollary}{Corollaries}
\crefname{definition}{Definition}{Definitions} \Crefname{definition}{Definition}{Definitions}
\crefname{example}{Example}{Examples}       \Crefname{example}{Example}{Examples}
\crefname{remark}{Remark}{Remarks}          \Crefname{remark}{Remark}{Remarks}
\crefname{question}{Question}{Questions}    \Crefname{question}{Question}{Questions}
\crefname{conjecture}{Conjecture}{Conjectures} \Crefname{conjecture}{Conjecture}{Conjectures}
\crefname{equation}{Equation}{Equations}    \Crefname{equation}{Equation}{Equations}
\crefname{figure}{Figure}{Figures}          \Crefname{figure}{Figure}{Figures}
\crefname{table}{Table}{Tables}             \Crefname{table}{Table}{Tables}
\renewenvironment{proof}{\noindent\textbf{Proof.}\ }{\hfill$\square$\par}
\title{\textbf{Categorification of generic Su-Zhang Character Formula
}}
\author{\textbf{Shunsuke Hirota}}
\date{\textit{\today}}
\begin{document}

\maketitle

For semisimple Lie algebras, the BGG resolution is often viewed as a
categorification of the Weyl character formula. For general linear Lie
superalgebras, Brundan--Stroppel~\cite{brundan2010highestII} constructed an
infinite resolution of the so-called Kostant simple modules by Kac modules,
but their construction does not directly generalize the classical BGG
resolution.

In this paper we construct, for weights lying outside a neighborhood of the walls of the Weyl chambers, a
resolution that categorifies a known Weyl-type finite-sum character formula
in the same spirit as the Kac--Wakimoto formula. Our resolution is built from
images of canonical homomorphisms between Verma modules attached to
non-conjugate Borel subalgebras related by odd reflections. In particular,
the construction developed here does generalize the classical BGG resolution.

\section{Introduction}\label{sec:intro}
General linear Lie superalgebras $\mathfrak{gl}(m|n)$ are widely regarded as natural
generalizations of general linear Lie algebras, and their
representation theory has been extensively studied as a rich and
interesting subject. One of the most fundamental problems is to
determine the characters of finite-dimensional simple representations.
This problem was solved independently by Serganova
\cite{serganova1996kazhdan} and Brundan \cite{brundan2003kazhdan}.
Their results can be expressed as the following infinite sum:
\[
  \operatorname{ch} L^{()}(\lambda)
  = \sum_{\mu \in P^+} K_{\lambda,\mu}(-1)\,\operatorname{ch} K_{\ge0}(\mu),
\]
where $L^{()}(\lambda)$ denotes the simple highest weight module,
$K_{\lambda,\mu}(-1)$ is the value at $-1$ of Serganova-Brundan’s
Kazhdan--Lusztig-type polynomial (now well understood in terms of the
arc–diagram calculus of Brundan--Stroppel \cite{stroppel2012highest}),
and $K_{\ge0}(\mu)$ is the Kac module, i.e.\ the standard object in the
finite-dimensional weight module category $\mathcal{F}$.

Building on Brundan’s result \cite{brundan2003kazhdan}, Su and Zhang
\cite{su2007character} obtained, for the first time, for any finite dimensional weight modules, a Weyl-type
character formula given by a finite sum. Unlike in the purely
even Lie algebra case, however, the Su--Zhang character formula is
rather intricate and not very convenient for explicit computations.
There are also other works on character formulas for $\mathfrak{gl}(m|n)$, such as \cite{gorelik2023gruson}, \cite{chmutov2014weyl}, and \cite{sergeev2024combinatoricsirreduciblecharacterslie}. Nevertheless, in
several important classes one can obtain very neat closed character
formulas \cite{chmutov2014weyl}. The best-known example is the Kac--Wakimoto character formula
(conjectured in \cite{kac1994integrable} and proved in
\cite{chmutov2015kac,cheng2017kac,gorelik2015characters}). Very roughly,
for a so-called Kac--Wakimoto weight (a.k.a. Kostant weight or totally disconnected weight)  $\lambda$ one has
\[
  \operatorname{ch} L^{()}(\lambda)
  =
  \frac{1}{\operatorname{atyp}(\lambda)!}
  \sum_{w \in W} (-1)^{\ell(w)}
    \frac{\operatorname{ch} M^{()}(w \cdot \lambda)}
         {\displaystyle\prod_{\beta \in \Gamma_{w \cdot \lambda}} (1 + e^{-\beta})},
\]
where $M^{()}(w \cdot \lambda)$ is the Verma module, $\Gamma_{w \cdot \lambda}$ denotes the set of atypical odd positive roots for $\lambda$ and
$\operatorname{atyp}(\lambda)$ denotes the atypicality of~$\lambda$.

In the classical setting of semisimple Lie algebras, the Weyl
character formula is categorified by the BGG resolution \cite{bgg1973schubert}. It is thus
natural to ask whether one can construct a resolution that
categorifies the Kac--Wakimoto character formula. The presence of the
factor $\frac{1}{\operatorname{atyp}(\lambda)!}$, however, makes such a
categorification problem rather delicate. Note that Brundan--Stroppel
\cite{brundan2010highestII} constructed, generalizing \cite{cheng2008bgg}, for Kac--Wakimoto weights 
$\lambda$, an explicit and very interesting resolution of the Kac
module $K_{\ge0}(\lambda)$ in terms of diagram algebras. This should be
viewed as a categorification of the Serganova--Brundan infinite-sum
formula, rather than of a Weyl-type formula. On the other hand, in the setting of Lie superalgebras, it is well known among experts that finite-dimensional simple modules cannot, in general, be resolved by Verma modules \cite{cheng2008bgg}. See also \cite{coulembier2014bernstein}.

Kac--Wakimoto weights can be considered, in a sense, as the most
singular among regular dominant weights. On the other hand, there are
natural ``generic’’ classes of weights. In this paper we work with the
class of $\mathfrak{g}_{-1}$-generic weights. Our notion of
$\mathfrak{g}_{-1}$-genericity coincides with the
$\Gamma^+$–genericity of Coulembier \cite{coulembier2016bott}, and is therefore
weaker than the notion of \emph{weakly generic} and  \emph{generic} in Penkov-Serganova
\cite{coulembier2016primitive,penkov1989cohomology,penkov1994generic}.  In particular, any weight lying outside a small neighbourhood of the
walls of the Weyl chambers is $\mathfrak{g}_{-1}$–generic.

Su and Zhang \cite{su2007character} showed that, on a certain generic
region which contains all $\mathfrak{g}_{-1}$-generic weights,
their character formula simplifies to
\[
  \operatorname{ch} L^{()}(\lambda)
  =
  \sum_{w \in W} (-1)^{\ell(w)}
    \frac{\operatorname{ch} M^{()}(w \cdot \lambda)}
         {\displaystyle\prod_{\beta \in \Gamma_{w \cdot \lambda}} (1 + e^{-\beta})}.
\]
Here the troublesome factor
$\frac{1}{\operatorname{atyp}(\lambda)!}$ disappears. (See also \cite{chmutov2014weyl} for a nice exposition.) This makes it
natural to try to construct a BGG-type resolution for such~$\lambda$.

Our main point of view is that such resolutions can be constructed from odd reflections and
changes of Borel subalgebras.  
In contrast to the situation for semisimple Lie algebras, a basic
classical Lie superalgebra admits several non-conjugate Borel
subalgebras, and passing from one Borel to another via odd reflections
is a genuinely new and essential feature of the theory.  From the
perspective of Nichols algebras of diagonal type
\cite{heckenberger2009classification,andruskiewitsch2017finite} and
their classification via their Weyl groupoids (a.k.a. generalized root systems)
\cite{heckenberger2008generalization,heckenberger2020hopf,bonfert2024weyl},
these changes of Borel are also of independent interest.

Our resolutionss are defined as follows.
Let $()$ be the distinguished Borel subalgebra and
$(n^m)$ the antidistinguished Borel subalgebra of
$\mathfrak{gl}(m|n)$. 
We define the narrow Verma module $N^{()}(\lambda)$ to be the
image of up to scalar, a unique
nonzero homomorphism Verma modules sharing same character with different choice of Borels; 
\[
  N^{()}(\lambda)
  := \operatorname{Im}\!\Bigl(
    M^{()}(\lambda)
    \longrightarrow
    M^{(n^m)}\bigl(\lambda - 2\rho_{\overline{1}}\bigr)
  \Bigr).
\]

Our main result can be stated as follows.

\begin{theorem}
Let $\mathfrak{g} = \mathfrak{gl}(m|n)$ and assume that $\lambda$ is a
dominant integral and $\mathfrak{g}_{-1}$-generic weight. Then there exists an
exact sequence of $\mathfrak{g}$-modules
\[
  0 \to
  N^{()}(w_0 \cdot \lambda) \to
  \bigoplus_{\substack{w \in W \\ \ell(w) = \ell(w_0)-1}}
    N^{()}(w \cdot \lambda) \to
  \cdots \to
  \bigoplus_{\substack{w \in W \\ \ell(w) = 1}}
    N^{()}(w \cdot \lambda) \to
  N^{()}(\lambda) \to
  L^{()}(\lambda) \to 0.
\]
Moreover, for a $\mathfrak{g}_{-1}$-generic weight $\lambda$, we have,
\[
  \operatorname{ch} N^{()}(\lambda)
  =
  \frac{\displaystyle \operatorname{ch} M^{()}(\lambda)}
       {\displaystyle\prod_{\beta \in \Gamma_{w \cdot \lambda}} (1 + e^{-\beta})}.
\]
\end{theorem}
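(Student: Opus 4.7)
The plan is to split the statement into the character identity for a single narrow Verma and the exactness of the candidate complex, handling the character formula first since it pins down the Euler characteristic and therefore exactly how much homological vanishing the second half must supply.

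For the character formula I factor the defining map $M^{()}(\lambda)\to M^{(n^m)}(\lambda-2\rho_{\overline{1}})$ as a composition of Verma homomorphisms along a chain of intermediate Borels obtained from a reduced sequence of odd reflections carrying $()$ to $(n^m)$. At each step, $\mathfrak{g}_{-1}$-genericity of $\lambda$---and, by a standard check, of every intermediate weight that arises---implies that the odd reflection along an odd positive root $\beta$ induces an isomorphism of Vermas when $\beta$ is typical for the intermediate weight, and a map with one-dimensional cokernel controlled by a negative $\beta$-root vector when $\beta\in\Gamma_\lambda$. Taking characters, the accumulated kernel contributes exactly $\prod_{\beta\in\Gamma_\lambda}(1+e^{-\beta})$, yielding the claimed formula for $\operatorname{ch} N^{()}(\lambda)$. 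Independence of the chosen reduced chain follows from the odd-reflection braid relations, realized at the Verma level in the generic range.

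To assemble the resolution I lift the classical BGG strategy to the image level. For each covering $w\lessdot w'$ in the Bruhat order of $W$, the canonical Verma embedding $M^{()}(w'\cdot\lambda)\hookrightarrow M^{()}(w\cdot\lambda)$ should, under genericity, be compatible with the canonical maps into the antidistinguished Vermas $M^{(n^m)}((w\cdot\lambda)-2\rho_{\overline 1})$, because the odd-reflection data defining those targets commutes with the even Weyl-group translations on $\lambda$. This compatibility lets the embedding descend to a canonical edge map $N^{()}(w'\cdot\lambda)\to N^{()}(w\cdot\lambda)$. Rigidifying scalars on the Bruhat graph by the standard BGG sign assignment then turns the assembled edges into a chain complex.

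Exactness rests on two ingredients. First, the Euler characteristic of the complex equals $\operatorname{ch} L^{()}(\lambda)$, by combining the narrow-Verma character formula just proved with the generic Su--Zhang formula recalled in the introduction; so it suffices to prove the vanishing of higher homology. I would establish this by induction on $\operatorname{atyp}(\lambda)$: the base case $\operatorname{atyp}(\lambda)=0$ is literally the classical BGG resolution, since $\Gamma_\lambda=\emptyset$ forces $N^{()}(\lambda)=M^{()}(\lambda)$, while the inductive step is carried out by an exact translation functor that sends narrow Vermas to narrow Vermas and respects the edge maps. The main obstacle I anticipate is precisely this last compatibility: verifying that translation through a wall of atypicality preserves the narrow-Verma structure and intertwines the canonical odd-reflection maps is complicated by the fact that $N^{()}(\lambda)$ is defined as an image rather than intrinsically as an induced module, so the argument has to be run at the level of the ambient pair of Vermas and then descended, checking functoriality carefully at each square of the Bruhat graph.
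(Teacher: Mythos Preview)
Your approach diverges from the paper's in both halves, and while parts of your plan are reasonable, the exactness argument has a real gap.

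\textbf{On the character formula.} You propose to factor the map $M^{()}(\lambda)\to M^{(n^m)}(\lambda-2\rho_{\overline 1})$ through a chain of odd reflections and read off a factor of $(1+e^{-\beta})$ at each atypical step. The phrase ``one-dimensional cokernel'' is wrong: already for $\mathfrak{gl}(2|1)$ the cokernel of a single odd-reflection map between Vermas is infinite-dimensional. What you would actually need is that, under genericity, each atypical step divides the character of the running image by $(1+e^{-\beta})$, and that these divisions do not interfere with one another along the chain. That is provable, but it is not a one-line observation about cokernels. The paper sidesteps the whole issue: it proves the resolution \emph{first}, then reads off $\sum_w(-1)^{\ell(w)}\operatorname{ch}N^{()}(w\cdot\lambda)=\operatorname{ch}L^{()}(\lambda)$, matches this against the Su--Zhang formula, and uses linear independence of $\mathfrak g_{\overline 0}$-Verma characters (via the direct-summand description of $\operatorname{Res}N^{()}$) to isolate each $\operatorname{ch}N^{()}(w\cdot\lambda)$.

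\textbf{On exactness.} Here your proposal and the paper really part ways. The paper never touches translation functors or induction on atypicality. It applies $K_{\ge 0}$ and $K_{\le 0}$ to the classical BGG resolution, getting two exact complexes of genuine Vermas; the narrow-Verma complex sits between them as a complex of images (the edge maps and BGG signs come for free from functoriality, so no separate sign-rigidification is needed). Then it restricts everything to $\mathfrak g_{\overline 0}$: under $\mathfrak g_{-1}$-genericity each restricted Verma splits as a direct sum of $\mathfrak g_{\overline 0}$-Vermas indexed by subsets $S\subset\Delta_{\overline 1}^{()+}$, and the restricted narrow-Verma complex is a \emph{direct summand} of this, hence exact. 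Your alternative---Euler characteristic plus translation-functor induction---runs into exactly the obstacle you flag: narrow Vermas are defined as images, not as induced modules, and there is no established reason why translation through an atypicality wall should send $N^{()}(\mu)$ to another $N^{()}(\mu')$ or commute with the canonical odd-reflection maps. You have not indicated how to close that gap, and the paper shows it is unnecessary: restriction to $\mathfrak g_{\overline 0}$ is both more elementary and decisive.
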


In particular, if $\lambda$ is antidominant, then $N^{()}(\lambda)$ is
simple. Hence we obtain a closed character formula for all
$\mathfrak{g}_{-1}$-generic antidominant simple highest weight
modules.

Our main tools in this paper is the restriction functor and the (dual) Kac functor.
We derive the character formula for $N^{()}(\lambda)$ from the
Su–Zhang character formula.

\subsection{Acknowledgments}
The author would like to thank Yoshiyuki Koga for suggesting this
project, for computing several low-rank examples, and for many helpful
discussions.
This work is supported by the Japan Society for the Promotion of Science (JSPS) through the Research Fellowship for Young Scientists (DC1), Grant Number JP25KJ1664.

\section{Well known basics}\label{sec:basics}
In this section, we summarize basic facts about $\mathfrak{gl}(m|n)$ and its
representation theory. All material in this section is
 well known.

Let the base field \(k\) be an algebraically closed field of characteristic \(0\).
Let $\mathbf{Vec}$ denote the category of vector spaces, and $\mathbf{sVec}$ the category of supervector spaces.
Let $\Pi:\mathbf{sVec}\to\mathbf{sVec}$ be the \emph{parity–shift functor}.
Let $F:\mathbf{sVec}\to\mathbf{Vec}$ be the monoidal functor that forgets the $\mathbb{Z}/2\mathbb{Z}$–grading.
Note that $F$ is \emph{not} a symmetric monoidal functor.
In what follows, whenever we refer to the dimension of a supervector space,
we mean this total (ordinary) dimension:
\[
\dim V := \dim_k F(V).
\]

From now on, we will denote by \( \mathfrak{g} \) a finite dimensional Lie superalgebra.
We consider the category \( \mathfrak{g}\text{-sMod} \), where morphisms respects \(\mathbb{Z}/2\mathbb{Z}\)-grading. (This is the module category of a monoid object in \( \mathfrak{g}\text{-sMod} \) in the sense of \cite{etingof2015tensor}.)

\subsection{Root systems \(\Delta\)}\label{subsec:roots}

\label{glmndef}
Let \( V = V_{\overline{0}} \oplus V_{\overline{1}} \) be a \( \mathbb{Z}/2\mathbb{Z} \)-graded vector space, where \( V_{\overline{0}} \) (the even part) is spanned by \( v_1, \dots, v_m \) and \( V_{\overline{1}} \) (the odd part) is spanned by \( v_{m+1}, \dots, v_{m+n} \).

The space \( \operatorname{End}(V) \) is spanned by basis elements \( E_{ij} \), defined by:  \(
E_{ij} \cdot v_k = \delta_{jk} v_i.
\)

The general linear Lie superalgebra \( \mathfrak{gl}(m|n) \) is defined as the Lie superalgebra spanned by all \( E_{ij} \) with \( 1 \leq i, j \leq m+n \), under the supercommutator:
\(
[E_{ij}, E_{kl}] = E_{ij} E_{kl} - (-1)^{|E_{ij}| |E_{kl}|} E_{kl} E_{ij},
\))
where \( |E_{ij}| = \overline{0} \) if \( E_{ij} \) acts within \( V_{\overline{0}} \) or \( V_{\overline{1}} \) (even), and \( |E_{ij}| = \overline{1} \) if it maps between \( V_{\overline{0}} \) and \( V_{\overline{1}} \) (odd).

In the general linear Lie superalgebra \( \mathfrak{g} = \mathfrak{gl}(m|n) \), the even part is given by \(
\mathfrak{g}_{\overline{0}} = \mathfrak{gl}(m) \oplus \mathfrak{gl}(n).
\)

We fix the standard Cartan subalgebra
\(
\mathfrak{h} := \bigoplus_{1 \le i \le m+n} k E_{ii}.
\)
Define linear functionals
$\varepsilon_1, \dots, \varepsilon_{n+m} \in \mathfrak{h}^*$
by requiring that
$\varepsilon_i(E_{jj}) = \delta_{ij}$ for $1 \le i,j \le n+m$.
Define \( \delta_i = \varepsilon_{m+i} \) for \( 1 \leq i \leq n \).
The non-degenerate symmetric bilinear form \( (\, , \,) \) on is defined as follows:
\(
(\varepsilon_i, \varepsilon_j) = 
(-1)^{|v_i|}\delta_{i,j}
\)

The root space \( \mathfrak{g}_\alpha \) associated with 
\( \alpha \in \mathfrak{h}^* \) is defined as
\(
\mathfrak{g}_\alpha := 
\{\, x \in \mathfrak{g} \mid [h, x] = \alpha(h)x, 
\text{ for all } h \in \mathfrak{h} \,\}.
\)
Then we have
\(
\mathfrak{g}_{\varepsilon_i - \varepsilon_j} = k E_{ij}.
\)

The set of roots \( \Delta \) is defined as
\(
\Delta := \{\, \alpha \in \mathfrak{h}^* 
\mid \mathfrak{g}_\alpha \neq 0 \,\} \setminus \{0\}.
\)

We have a root space decomposition of \( \mathfrak{g} \) with respect to \( \mathfrak{h} \):
\[
\mathfrak{g} = \mathfrak{h} 
\oplus \bigoplus_{\alpha \in \Delta} \mathfrak{g}_\alpha, 
\qquad \text{and} \quad \mathfrak{g}_0 = \mathfrak{h}.
\]

we define The sets of all even roots and odd roots as follows:
\(
\Delta_{\overline{0}} = 
\{\, \varepsilon_i - \varepsilon_j, \ \delta_i - \delta_j 
\mid i \neq j \,\},
\)
\(
\Delta_{\overline{1}}  = 
\{\, \varepsilon_i - \delta_j 
\mid 1 \le i \le m, \ 1 \le j \le n \,\}.
\)
Note that for any odd root $\alpha\in\Delta_{\bar1}$ we have $(\alpha,\alpha)=0$; i.e., every odd root is isotropic .

\begin{definition}
 A subset \( \Delta^+ \subset \Delta \) is called a \emph{positive system} if it satisfies:
\begin{enumerate}
    \item \( \Delta = \Delta^+ \cup (-\Delta^+) \) (disjoint union),
    \item For any \( \alpha, \beta \in \Delta^+ \), if \( \alpha + \beta \in \Delta \), then \( \alpha + \beta \in \Delta^+ \).
\end{enumerate}

Given a positive system \( \Delta^+ \), the associated \emph{fundamental system} is defined by
\(
\Pi := \{ \alpha \in \Delta^+ \mid \alpha \text{ cannot be written as } \alpha = \beta + \gamma \text{ with } \beta, \gamma \in \Delta^+ \}.
\)
Elements of \( \Pi \) are called \emph{simple roots}.

Let \( \Delta^+ \subset \Delta \) be a positive system. The corresponding \emph{Borel subalgebra} \( \mathfrak{b} \subset \mathfrak{g} \) is defined as
\(
\mathfrak{b} := \mathfrak{h} \oplus \bigoplus_{\alpha \in \Delta^+} \mathfrak{g}_\alpha,
\)
where \( \mathfrak{g}_\alpha \subset \mathfrak{g} \) is the root space corresponding to \( \alpha \in \Delta \).
Choose natural root vectors $e_\alpha\in\mathfrak g_\alpha$, $e_{-\alpha}\in\mathfrak g_{-\alpha}$ Then for any $\lambda\in\mathfrak h^*$,
\(
\lambda([e_\alpha,e_{-\alpha}])=(\lambda,\alpha).
\)

For a Borel subalgebra \( \mathfrak{b}\), we express the triangular decomposition of \( \mathfrak{g} \) as
\(
\mathfrak{g} = \mathfrak{n}^{\mathfrak{b}-} \oplus \mathfrak{h} \oplus \mathfrak{n}^{\mathfrak{b}+},
\)
where \( \mathfrak{b} = \mathfrak{h} \oplus \mathfrak{n}^{\mathfrak{b}+} \).

The sets of positive roots, even positive roots, and odd positive roots corresponding to \(\mathfrak b\) are denoted by
\(\Delta^{\mathfrak b,+}\),
\(\Delta_{\bar 0}^{\mathfrak b,+}\),
and
\(\Delta_{\bar 1}^{\mathfrak b,+}\),
respectively.
Let \(\Pi^{\mathfrak b}\) denote the set of all simple roots associated with the fixed Borel subalgebra \(\mathfrak b\),
and write
\(\Pi^{\mathfrak b}=\Pi^{\mathfrak b}_{\bar 0}\sqcup\Pi^{\mathfrak b}_{\bar 1}\)
for its decomposition into even and odd simple roots.
\end{definition}

The Weyl group $W:= S_m\times S_n$ of
$\mathfrak{g}_{\bar0}=\mathfrak{gl}(m)\oplus\mathfrak{gl}(n)$ acts on $\mathfrak h^*$ by
permuting the $\varepsilon_i$’s and the $\delta_j$’s separately.
For an even root $\alpha$, we denote by $s_\alpha$ the corresponding reflection in $W$.
We denote by $w_0$ the longest element of the Weyl group $W$.

We call the Borel subalgebra associated with the positive root system
$\Delta^{()+}:=\{\varepsilon_i-\varepsilon_j\mid 1\le i<j\le m\}
\cup\{\delta_p-\delta_q\mid 1\le p<q\le n\}
\cup\{\varepsilon_i-\delta_q\mid 1\le i\le m,\ 1\le q\le n\}$
the \emph{distinguished Borel subalgebra}, and denote it by $()$.
Similarly, we call the Borel subalgebra associated with
$\Delta^{(n^m)+}:=\{\varepsilon_i-\varepsilon_j\mid 1\le i<j\le m\}
\cup\{\delta_p-\delta_q\mid 1\le p<q\le n\}
\cup\{\delta_p-\varepsilon_i\mid 1\le p\le n,\ 1\le i\le m\}$
the \emph{anti-distinguished Borel subalgebra}, and denote it by $(n^m)$.

\subsection{Contragredient duality \(()^{\vee}\)}\label{subsec:contragredient-duality}

\begin{definition}
A weight \(\lambda\in \mathfrak{h}^*\) is \emph{integral} if \(\lambda=\sum_{i=1}^m a_i\,\varepsilon_i+\sum_{j=1}^n b_j\,\delta_j\) with  \(a_i,b_j\in\mathbb Z\)  for all \(i,j\).  
Write the set of integral weights as \(\Lambda\).

We denote by $s\mathcal{W}$ the full subcategory of $\mathfrak{g}$-modules
which are $\mathfrak{h}$-semisimple with integral weights and
finite-dimensional weight spaces (i.e.\ integral weight modules with
finite-dimensional weight spaces).

A  module \( M\in s\mathcal{W} \) admits a weight space decomposition :
\[
M = \bigoplus_{\lambda \in \Lambda} M_\lambda, \qquad
M_\lambda := \{ v \in M \mid h \cdot v = \lambda(h)v \ \text{for all } h \in \mathfrak{h} \}.
\]
\end{definition}

\begin{lemma}[See also \cite{brundan2014representations} Lemma 2.2,  \cite{chen2020primitive} Proposition 2.2.3]
We can choose \( p \in \operatorname{Map}(\Lambda, \mathbb{Z}/2\mathbb{Z}) \) such that
\[
\mathcal{W}:= \{ M \in s\mathcal{W}\mid \deg M_{\lambda} = p(\lambda) \text{ for } \lambda \in \Lambda, M_\lambda \neq 0 \}
\]
forms a Serre subcategory, \(  \mathcal{W}\) contains the trivial module and \( s\mathcal{W}= \mathcal{W}\oplus \Pi \mathcal{W}\).

\end{lemma}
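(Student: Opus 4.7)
The plan is to give an explicit formula for $p$ and then verify each of the three required properties directly. For $\lambda=\sum_{i=1}^m a_i\varepsilon_i+\sum_{j=1}^n b_j\delta_j\in\Lambda$, I would set
$$p(\lambda):=\sum_{j=1}^{n} b_j \pmod{2}\in\mathbb{Z}/2\mathbb{Z}.$$
The single key property from which everything follows is that for every root $\alpha\in\Delta$ and every $\lambda\in\Lambda$ one has $p(\lambda+\alpha)=p(\lambda)+|\alpha|$. This is immediate from the root list: the even roots $\varepsilon_i-\varepsilon_k$ and $\delta_j-\delta_l$ preserve $\sum b_j$ modulo $2$, while the odd roots $\pm(\varepsilon_i-\delta_j)$ flip its parity.

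Using this compatibility, the Serre property becomes mechanical. Closure under subobjects and quotients is clear, since for $N\subseteq M$ (resp.\ a quotient) the weight space $N_\lambda$ is a subspace (resp.\ quotient) of $M_\lambda$ and therefore sits in degree $p(\lambda)$ whenever nonzero. Closure under extensions follows by taking weight-space-wise exact sequences, since an extension of two super vector spaces concentrated in degree $p(\lambda)$ remains concentrated in that degree. The trivial module is supported at weight $0$ in degree $\bar 0=p(0)$, so lies in $\mathcal{W}$.

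For the splitting $s\mathcal{W}=\mathcal{W}\oplus\Pi\mathcal{W}$, given $M\in s\mathcal{W}$ I would define
$$M^+:=\bigoplus_{\lambda\in\Lambda} M_\lambda^{p(\lambda)},\qquad M^-:=\bigoplus_{\lambda\in\Lambda} M_\lambda^{p(\lambda)+\bar 1}.$$
As a super vector space this is manifestly a decomposition of $M$. The crucial check is that each summand is $\mathfrak{g}$-stable: for homogeneous $x\in\mathfrak g_\alpha$ the action sends $M_\lambda^{\varepsilon}$ into $M_{\lambda+\alpha}^{\varepsilon+|\alpha|}$, and combining this with $p(\lambda+\alpha)=p(\lambda)+|\alpha|$ shows that the parity-matching condition on $M^+$ (resp.\ $M^-$) is preserved by the $\mathfrak{g}$-action. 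Hence $M^+\in\mathcal{W}$ and $M^-\in\Pi\mathcal{W}$, and the decomposition is functorial in $M$.

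The only genuine content is identifying the compatibility $p(\lambda+\alpha)=p(\lambda)+|\alpha|$ as the necessary and sufficient constraint on $p$; once that is made, the admissible $p$'s form a torsor under the group of characters $\Lambda\to\mathbb{Z}/2\mathbb{Z}$ vanishing on the root lattice, and after fixing one such $p$ the rest of the proof is bookkeeping with weight-space parities. There is no serious obstacle; the lemma is essentially the observation that the parity of weight spaces of a module in $s\mathcal{W}$ is pinned down, on each connected component of its support under root translation, by a single global parity choice.
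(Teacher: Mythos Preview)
Your argument is correct. The paper itself does not supply a proof of this lemma; it is quoted as a known fact with references to \cite{brundan2014representations} and \cite{chen2020primitive}. Your proof is essentially the standard one found in those references: pick an explicit parity function $p$ compatible with the root grading in the sense that $p(\lambda+\alpha)=p(\lambda)+|\alpha|$, and then verify everything directly from that compatibility. There is nothing to add.
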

Similarly, for \(\mathfrak g_{\bar 0}\) we introduce the category \(s\mathcal{W}_{\bar 0}\), and define the category \(\mathcal{W}_{\bar 0}\) so as to be compatible with the restriction functor \(\operatorname{Res}^{\mathfrak g}_{\mathfrak g_{\bar 0}}\).

\begin{lemma}[\cite{bell1993theory}, Th.\ 2.2; \cite{coulembier2017gorenstein}, §6.1]\label{indres}
Let \(\operatorname{Res}^{\mathfrak g}_{\mathfrak g_{\bar 0}}:\mathfrak g\text{-sMod}\to\mathfrak g_{\bar 0}\text{-sMod}\),
\(\operatorname{Ind}^{\mathfrak g}_{\mathfrak g_{\bar 0}}:\mathfrak g_{\bar 0}\text{-sMod}\to\mathfrak g\text{-sMod}\), and
\(\operatorname{Coind}^{\mathfrak g}_{\mathfrak g_{\bar 0}}:\mathfrak g_{\bar 0}\text{-sMod}\to\mathfrak g\text{-sMod}\)
denote restriction, induction, and coinduction, respectively; thus
\(\operatorname{Ind}^{\mathfrak g}_{\mathfrak g_{\bar 0}}\dashv \operatorname{Res}^{\mathfrak g}_{\mathfrak g_{\bar 0}}\dashv \operatorname{Coind}^{\mathfrak g}_{\mathfrak g_{\bar 0}}\).

Then:
(1) each of \(\operatorname{Res}^{\mathfrak g}_{\mathfrak g_{\bar0}}\), \(\operatorname{Ind}^{\mathfrak g}_{\mathfrak g_{\bar0}}\), \(\operatorname{Coind}^{\mathfrak g}_{\mathfrak g_{\bar0}}\) is exact and restricts between \(\mathcal W_{\bar0}\) and \(\mathcal W\);
(2) there is a natural isomorphism of functors \(\operatorname{Ind}^{\mathfrak g}_{\mathfrak g_{\bar0}}\cong \operatorname{Coind}^{\mathfrak g}_{\mathfrak g_{\bar0}}\);
(3) The unit \(\operatorname{Id}_{\mathcal O_{\bar0}}\!\to\!\operatorname{Res}^{\mathfrak g}_{\mathfrak g_{\bar0}}\!\circ\!\operatorname{Ind}^{\mathfrak g}_{\mathfrak g_{\bar0}}\) is a split monomorphism, and the counit \(\operatorname{Res}^{\mathfrak g}_{\mathfrak g_{\bar0}}\!\circ\!\operatorname{Ind}^{\mathfrak g}_{\mathfrak g_{\bar0}}\twoheadrightarrow \operatorname{Id}_{\mathcal W_{\bar0}}\) is a split epimorphism.
(4) \(\operatorname{Res}^{\mathfrak g}_{\mathfrak g_{\bar0}}\circ \operatorname{Ind}^{\mathfrak g}_{\mathfrak g_{\bar0}}\cong \Lambda(\mathfrak g_{\bar1})\otimes -\) on \(\mathcal W_{\bar0}\).
\end{lemma}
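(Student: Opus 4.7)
The plan is to reduce all four assertions to the PBW theorem together with a single trace computation.

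First I would invoke the PBW theorem relative to $\mathfrak g_{\bar 0}$: choosing an ordered basis of $\mathfrak g_{\bar 1}$, symmetrization produces $k$-linear isomorphisms
\[
U(\mathfrak g)\;\cong\;U(\mathfrak g_{\bar 0})\otimes_k\Lambda(\mathfrak g_{\bar 1})\;\cong\;\Lambda(\mathfrak g_{\bar 1})\otimes_k U(\mathfrak g_{\bar 0}),
\]
exhibiting $U(\mathfrak g)$ as a free $U(\mathfrak g_{\bar 0})$-module of finite rank on either side. From this, exactness of $\operatorname{Ind}^{\mathfrak g}_{\mathfrak g_{\bar 0}}=U(\mathfrak g)\otimes_{U(\mathfrak g_{\bar 0})}-$ and of $\operatorname{Coind}^{\mathfrak g}_{\mathfrak g_{\bar 0}}=\operatorname{Hom}_{U(\mathfrak g_{\bar 0})}(U(\mathfrak g),-)$ is immediate, while exactness of $\operatorname{Res}$ is tautological. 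Since $\Lambda(\mathfrak g_{\bar 1})$ is finite-dimensional and $\mathfrak h$-semisimple with integral weights, $\operatorname{Ind}$ and $\operatorname{Coind}$ send modules with finite-dimensional integral weight spaces to the same; compatibility with the parity refinement $\mathcal{W}\subset s\mathcal{W}$ holds because $p$ was fixed precisely so that parity shifts by an odd root match the parity of a generator of a one-dimensional $\Lambda^k(\mathfrak g_{\bar 1})$-weight space.

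For (4), I would define $\phi:\Lambda(\mathfrak g_{\bar 1})\otimes V\to\operatorname{Res}\operatorname{Ind}(V)$ by $y\otimes v\mapsto y\otimes v$ via the PBW embedding $\Lambda(\mathfrak g_{\bar 1})\hookrightarrow U(\mathfrak g)$; bijectivity is PBW. To verify $\mathfrak g_{\bar 0}$-equivariance with the diagonal action (adjoint on $\Lambda(\mathfrak g_{\bar 1})$, given on $V$), commute an even element $X$ past an ordered odd product $Y_{i_1}\cdots Y_{i_k}$ inside $U(\mathfrak g)$: the Leibniz corrections $\sum_j Y_{i_1}\cdots[X,Y_{i_j}]\cdots Y_{i_k}$ reproduce the adjoint action on $\Lambda^k(\mathfrak g_{\bar 1})$, while the trailing $Y_{i_1}\cdots Y_{i_k}X$ is absorbed across $\otimes_{U(\mathfrak g_{\bar 0})}$ as $X$ acting on $V$. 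For (3), assertion (4) identifies $\operatorname{Res}\operatorname{Ind}(V)$ with the graded $\mathfrak g_{\bar 0}$-module $\bigoplus_k\Lambda^k(\mathfrak g_{\bar 1})\otimes V$, each summand being a submodule. The unit $v\mapsto 1\otimes v$ is inclusion of the $k=0$ summand, split by projection onto it; transporting the counit $\operatorname{Res}\operatorname{Coind}\twoheadrightarrow\operatorname{Id}$ (evaluation at $1$) across the isomorphism of (2) presents it as projection onto the top-degree summand $\Lambda^{\mathrm{top}}(\mathfrak g_{\bar 1})\otimes V\cong V$, split by inclusion.

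The main obstacle is (2). The cleanest route is to use that $U(\mathfrak g_{\bar 0})\subset U(\mathfrak g)$ is a free Frobenius extension of rank $2^{\dim\mathfrak g_{\bar 1}}$ (since $\Lambda(\mathfrak g_{\bar 1})$ is a Frobenius $k$-algebra with trace given by projection to the top exterior power), so one has $\operatorname{Ind}\cong\operatorname{Coind}\otimes\chi$, where $\chi$ is the Nakayama character given by the $\mathfrak g_{\bar 0}$-action on $\Lambda^{\mathrm{top}}(\mathfrak g_{\bar 1})$. A direct weight-sum computation shows that for $\mathfrak g=\mathfrak{gl}(m|n)$ the $\mathfrak h$-weight of $\Lambda^{\mathrm{top}}(\mathfrak g_{\bar 1})$ is
\[
\sum_{i\le m,\,j\le n}(\varepsilon_i-\delta_j)\;+\;\sum_{i\le m,\,j\le n}(\delta_j-\varepsilon_i)\;=\;0,
\]
and, $\mathfrak g_{\bar 0}$ being reductive, a one-dimensional module of weight zero is forced to be the trivial module. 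Hence $\chi$ is trivial and $\operatorname{Ind}\cong\operatorname{Coind}$ as functors, as claimed in the cited references.
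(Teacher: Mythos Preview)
The paper does not supply its own proof of this lemma: it is stated with attributions to \cite{bell1993theory} and \cite{coulembier2017gorenstein} and left at that. Your argument is correct and is in fact the standard one; the remark the paper places immediately after the lemma (that in general $\operatorname{Ind}^{\mathfrak g}_{\mathfrak g_{\bar0}}\cong \operatorname{Coind}^{\mathfrak g}_{\mathfrak g_{\bar0}}\circ(-\otimes \Lambda^{\dim\mathfrak g_{\bar1}}\mathfrak g_{\bar1})$, and that $\Lambda^{\dim\mathfrak g_{\bar1}}(\mathfrak g_{\bar1})\cong L_{\bar0}(0)$ for $\mathfrak{gl}(m|n)$) is exactly the Nakayama--twist computation you carry out for (2), and the observation that $\Lambda^0$ and $\Lambda^{\mathrm{top}}$ are trivial $\mathfrak g_{\bar0}$-summands of $\Lambda(\mathfrak g_{\bar1})$ is precisely your splitting for (3). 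So your proposal fills in what the paper only cites, along the same lines the paper's surrounding remarks suggest.
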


\begin{definition}
For a module \(M\) in the category \(\mathcal W\), the character \(\operatorname{ch} M\) is a formal sum that encodes the dimensions of the weight spaces of \(M\): \(\operatorname{ch} M := \sum_{\lambda} \dim M_{\lambda} e^{\lambda}\).
\end{definition}

Lie superalgebra \(\mathfrak{gl}(m|n)\) has an antiautomorphism \(\tau\) given by the formula \(\tau(E_{ij}) := -(-1)^{|i|(|i|+|j|)}\,E_{ji}\).
For a weight module \(M=\bigoplus_{\nu\in\mathfrak h^{*}} M_\nu\), we let \(M^\vee := \bigoplus_{\nu} M_\nu^{*}\) be the restricted dual of \(M\).
We may define an action of \(\mathfrak{gl}(m|n)\) on \(M^\vee\) by \((g\cdot f)(x):= -\,f(\tau(g)x)\), \(f\in M^\vee\), \(g\in \mathfrak{gl}(m|n)\), \(x\in M\).
This defines a contravariant duality functor \( (-)^\vee : \mathcal W \to \mathcal W \).

\begin{proposition}[\cite{musson2012lie}]\label{prop:duality_properties}

\begin{enumerate}
  \item $(-)^\vee$ is an exact contravariant functor.
  \item There is a natural isomorphism $M \xrightarrow{\sim} M^{\vee\vee}$ for all $M\in\mathcal W$ .
  \item $\operatorname{ch}(M^\vee)=\operatorname{ch}(M)$; in particular, the weight multiplicities are preserved.
  \item Since \(\tau\) is an automorphism of \(\mathfrak g\) compatible with the inclusions of subalgebras, there is a natural isomorphism of functors \(\mathrm{Res}_{\mathfrak g_{\bar 0}}^{\mathfrak g}\cong(\,\cdot\,)^{\vee}\circ \mathrm{Res}_{\mathfrak g_{\bar 0}}^{\mathfrak g}\circ(\,\cdot\,)^{\vee}\). Consequently, \(\mathrm{Ind}_{\mathfrak g_{\bar 0}}^{\mathfrak g}\cong(\,\cdot\,)^{\vee}\circ \mathrm{Ind}_{\mathfrak g_{\bar 0}}^{\mathfrak g}\circ(\,\cdot\,)^{\vee}\).
\end{enumerate}
\end{proposition}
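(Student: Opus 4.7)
All four assertions are essentially formal consequences of the definition, and the plan is to verify each directly, using systematically that every weight space of a module in $\mathcal W$ is finite-dimensional. For (1), contravariance of $(-)^{\vee}$ is immediate since $\tau$ is an antiautomorphism; exactness factors through the (exact) weight-space decomposition on $\mathcal W$ and the ordinary dual on finite-dimensional vector spaces, which is itself exact. For (3), it suffices to locate the weight of $f\in M_\nu^{*}$ inside $M^{\vee}$: a one-line computation on generators gives $\tau(E_{ii})=-E_{ii}$, and the sign in the defining formula $(h\cdot f)(x)=-f(\tau(h)x)$ then produces $(h\cdot f)=\nu(h)f$ for $f\in M_\nu^{*}$, so $\operatorname{ch}(M^{\vee})=\operatorname{ch}(M)$.

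For (2), on each weight space we use the classical evaluation isomorphism $M_\nu\xrightarrow{\sim}M_\nu^{**}$, which exists because $M_\nu$ is finite-dimensional. The substantive point is $\mathfrak g$-equivariance of the assembled map. The relevant super identity is $\tau^{2}(x)=(-1)^{|x|}x$, which follows from a direct calculation on the basis $E_{ij}$; in particular $\tau$ is not literally an involution in the ungraded sense. The standard super remedy is to take the natural map to be the super-twisted evaluation $m\mapsto\bigl(f\mapsto(-1)^{|m||f|}f(m)\bigr)$, and $\mathfrak g$-linearity together with naturality in $M$ follow by a sign-bookkeeping calculation.

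For (4), since $\tau$ preserves $\mathfrak g_{\bar 0}$ setwise as a super antiautomorphism, the $\mathfrak g_{\bar 0}$-action on $M^{\vee}$ defined via $\tau$ literally coincides with the contragredient $(M|_{\mathfrak g_{\bar 0}})^{\vee}$ computed via $\tau|_{\mathfrak g_{\bar 0}}$, which is the $\mathrm{Res}$-commutation. For the induction statement I would use a Yoneda argument: for any $N\in\mathcal W$, the chain
\[
\mathrm{Hom}_{\mathfrak g}\!\bigl(N,(\mathrm{Ind}\,M)^{\vee}\bigr)
\cong \mathrm{Hom}_{\mathfrak g}\!\bigl(\mathrm{Ind}\,M,N^{\vee}\bigr)
\cong \mathrm{Hom}_{\mathfrak g_{\bar 0}}\!\bigl(M,(\mathrm{Res}\,N)^{\vee}\bigr)
\cong \mathrm{Hom}_{\mathfrak g_{\bar 0}}\!\bigl(\mathrm{Res}\,N,M^{\vee}\bigr)
\cong \mathrm{Hom}_{\mathfrak g}\!\bigl(N,\mathrm{Coind}(M^{\vee})\bigr)
\]
combines the symmetry $\mathrm{Hom}(A,B^{\vee})\cong\mathrm{Hom}(B,A^{\vee})$ of contragredient pairings, the $\mathrm{Ind}\dashv\mathrm{Res}$ adjunction, the just-proven $\mathrm{Res}$-commutation, and the $\mathrm{Res}\dashv\mathrm{Coind}$ adjunction. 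Combining with $\mathrm{Ind}\cong\mathrm{Coind}$ from \Cref{indres} gives the desired natural isomorphism $(\mathrm{Ind}\,M)^{\vee}\cong\mathrm{Ind}(M^{\vee})$, which is equivalent to the stated assertion.

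The main obstacle throughout is the careful bookkeeping of super signs, especially the failure of $\tau^{2}$ to be the strict identity: this is what forces the super-twisted evaluation in (2) and must be monitored every time one iterates $(-)^{\vee}$.
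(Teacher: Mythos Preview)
The paper does not prove this proposition: it is stated without proof in Section~2 (``Well known basics''), where the author explicitly declares all material to be standard. Your proposal is therefore not competing with any argument in the paper but rather supplying the omitted verification.

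Your outline is correct and complete. A few remarks: your handling of $\tau^{2}(x)=(-1)^{|x|}x$ via the super-twisted evaluation in (2) is the right fix and is exactly what one needs in the super setting. In (4), your Yoneda chain is valid; note that the symmetry $\mathrm{Hom}(A,B^{\vee})\cong\mathrm{Hom}(B,A^{\vee})$ you invoke is itself a consequence of (1) and (2) (contravariant self-equivalence), so the logical dependencies are in order. One could also argue (4) slightly more abstractly: once $\mathrm{Res}\cong(\,\cdot\,)^{\vee}\circ\mathrm{Res}\circ(\,\cdot\,)^{\vee}$ is known, conjugating the adjunction $\mathrm{Ind}\dashv\mathrm{Res}$ by the contravariant self-equivalence $(\,\cdot\,)^{\vee}$ shows that $(\,\cdot\,)^{\vee}\circ\mathrm{Ind}\circ(\,\cdot\,)^{\vee}$ is right adjoint to $\mathrm{Res}$, hence isomorphic to $\mathrm{Coind}\cong\mathrm{Ind}$ by \Cref{indres}. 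This is equivalent to what you wrote but makes the role of $\mathrm{Ind}\cong\mathrm{Coind}$ more transparent.
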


\subsection{\( \mathfrak{b} \)-Verma modules \(M^{\mathfrak{b}}(\lambda)\)}\label{subsec:verma-modules}

Let  \( \mathfrak{b}\) be a Borel subalgebra.
Let \( k_{\lambda}^{\mathfrak{b}} \) be the one-dimensional \( \mathfrak{b} \)-module corresponding to \( \lambda \in \Lambda \). The \( \mathfrak{b} \)-Verma module with highest weight \( \lambda \) is defined by
\(
M^{\mathfrak{b}}(\lambda) = U(\mathfrak{g}) \otimes_{U(\mathfrak{b})} k_{\lambda}^{\mathfrak{b}}.
\)
(Here, the parity of \( k_{\lambda}^{\mathfrak{b}} \) is chosen so that \( M^{\mathfrak{b}}(\lambda) \in \mathcal{W} \). )

For the $\mathfrak{b}$-Verma supermodule $M^{\mathfrak{b}}(\lambda)$, we denote its $\mathfrak{b}$-highest weight vector by $v_{\lambda}^{\mathfrak{b}}$.
Its simple top is denoted by \( L^{\mathfrak{b}}(\lambda) \). 
Similarly, for the even part \( \mathfrak{g}_{\overline{0}} \), the corresponding Verma module and simple module are denoted by \( M_{\overline{0}}(\lambda) \) and  \( L_{\overline{0}}(\lambda) \), respectively.

We define Berezinian  weight
\(
ber:=\varepsilon_1+\cdots+\varepsilon_m-(\delta_1+\cdots+\delta_n),
\)
A weight \(\lambda\) is orthogonal to all roots if and only if \(\lambda=tber\) for some \(t\in k\) (i.e. a scalar multiple of the Berezinian weight.

\begin{definition}[integral Weyl vectors \cite{brundan2014representations}]
Define the vectors
\[
\rho_{\bar 0}:=\frac12\sum_{\beta\in\Delta_{\bar 0}^+}\beta,
\qquad
\rho_{\bar 1}^{\mathfrak b}:=\frac12\sum_{\gamma\in\Delta_{\bar 1}^{\mathfrak b,+}}\gamma,
\]
and write \(\rho_{\bar 1}:=\rho_{\bar 1}^{()}\). We have \(\rho_{\bar 1}=-\rho_{\bar 1}^{(n^m)}\).

Let the integral Weyl vector be
\[
\rho
:=
-\varepsilon_2-2\varepsilon_3-\cdots-(m-1)\varepsilon_n
+(m-1)\delta_{m+1}+(m-2)\delta_{m+2}+\cdots+(m-n)\delta_{m+n}.
\]
One checks that
\(
\rho=\rho_{\bar 0}-\rho_{\bar 1}^{()}-\frac12(m+n-1)\,\mathrm{ber}.
\)

For a Borel \(\mathfrak b\), set
\[
\rho^{\mathfrak b}
:=
\rho_{\bar 0}-\rho_{\bar 1}^{\mathfrak b}-\frac12(m+n-1)\,\mathrm{ber}.
\]

It is worth noting that for a \(\mathfrak b\)-simple root \(\alpha\),
\(
\rho^{\,r_\alpha\mathfrak b}=\rho^{\mathfrak b}+\alpha.
\)
Moreover, if \(\alpha\in\Pi^{\mathfrak b}\), then
\(
(\rho^{\mathfrak b},\alpha)=\frac12(\alpha,\alpha).
\)
In particular, if \(\alpha\) is odd isotropic, then \((\alpha,\alpha)=0\) and hence
\(
(\rho^{\mathfrak b},\alpha)=0.
\)

It is convenient to encode an integral weight \(\lambda\) by the \(m\mid n\)-tuple
\((\lambda_1,\dots,\lambda_m\mid \lambda_{m+1},\dots,\lambda_{m+n})\) of integers defined by
\(\lambda_i:=(\lambda+\rho,\varepsilon_i)\).
Note that our \(\rho^{\mathfrak b}\) is integral.

We call \(\lambda=(\lambda_1,\dots,\lambda_m\mid \lambda_{m+1},\dots,\lambda_{m+n})\) (or \(L^{()}(\lambda)\))  \emph{regular} iff
\(\lambda_i\ne\lambda_j\) for all \(1\le i<j\le m\) and  \(m+1\le i <j\le  m+n\).

For an integral weight \(\lambda=(\lambda_1,\dots,\lambda_m\mid \lambda_{m+1},\dots,\lambda_{m+n})\),
\(\lambda\) (or \(L^{()}(\lambda)\)) is \emph{antidominant} iff
\(\lambda_1\le\lambda_2\le\cdots\le\lambda_m\) and
\(\lambda_{m+1}\ge\lambda_{m+2}\ge\cdots\ge\lambda_{m+n}\).

We say that \(\lambda\)  (or \(L^{()}(\lambda)\)) is \emph{dominant} iff
\(\lambda_{1}\ge\cdots\ge\lambda_{m}\)  and
\(\lambda_{m+1}\le\cdots\le\lambda_{m+n}\) .

For $w \in W$ and a weight $\lambda \in \Lambda$, we define the
$\mathfrak{b}$-dot action of $W$ on $\Lambda$ by
\[
  w \cdot_{\mathfrak{b}} \lambda 
  := w(\lambda + \rho^{\mathfrak{b}}) - \rho^{\mathfrak{b}}.
\]
When $\mathfrak{b}=()$, we simply write $w \cdot \lambda$ for $w \cdot_{()} \lambda$.

The (usual) dot action of $W$ on $\Lambda$ is defined by
\[
  w \cdot \lambda := w(\lambda + \rho_{\overline{0}}) - \rho_{\overline{0}}.
\]

$\rho^{\mathfrak{b}}_{\overline{1}}$ is $W$–invariant iff $\mathfrak{b}=() ,(n^m)$ , so the three actions coincide:
\[
  w \cdot_{()} \lambda =  w \cdot_{(n^m)} \lambda =  w \cdot \lambda
  \qquad \text{for all } w \in W,\ \lambda \in \Lambda.
\]

For $\lambda \in \Lambda$, we denote by $\lambda^{\mathrm{dom}}$ and
$\lambda^{\mathrm{antidom}}$ the unique dominant and antidominant
weights, respectively, in the dot-orbit $W \cdot \lambda$.

 We define the \emph{atypicality} of  a simple module \( L =L^{\mathfrak b}(\lambda)\) as
\[
\operatorname{aty}L^{\mathfrak b}(\lambda) := \max \left\{ t \;\middle|\;
\begin{array}{l}
\text{there exist mutually orthogonal distinct roots } \alpha_1, \dots, \alpha_t \in \Delta_{\bar1} /(\pm 1)\\
\text{such that } (\lambda + \rho^{\mathfrak{b}}, \alpha_i) = 0 \quad \text{for all } i = 1, \dots, t
\end{array}
\right\}.
\]
This definition is independent of the choice of \( \mathfrak{b} \).
If \( \mathrm{aty}L = 0 \), then \( L \) is called \emph{typical}; otherwise, it is called \emph{atypical}.

\end{definition}

For a root $\alpha$, we write $E_\alpha$ for the natural root vector corresponding to $\alpha$.

\begin{lemma}[\cite{musson2012lie}]\label{lem:even_embedding}
Let $\mathfrak{g}=\mathfrak{gl}(m|n)$ and let $\mathfrak{b}$ be a Borel subalgebra.
Let $\alpha $ be an even $\mathfrak{b}$-simple root .
For an integral weight $\lambda \in \Lambda$ , if
\(
 \langle \lambda + \rho^{\mathfrak{b}}, \alpha^\vee \rangle
  =m \in \mathbb{Z}_{>0}\) , then
  \(
    \dim \operatorname{Hom}\bigl(M^{\mathfrak{b}}(s_\alpha \cdot_{\mathfrak{b}} \lambda),
                               M^{\mathfrak{b}}(\lambda)\bigr)=1
  \)
, and any nonzero homomorphism is injective. 
It is given by \( v_{s_\alpha \cdot_{\mathfrak{b}} \lambda}^{\mathfrak{b}} \mapsto {E_{-\alpha}^m}v_{\lambda}^{\mathfrak{b}}\)
\end{lemma}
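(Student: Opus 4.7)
The plan is to adapt the classical Bernstein--Gelfand--Gelfand argument for an embedding of Verma modules under an even simple reflection, paying attention to the annihilation conditions imposed by the odd simple roots of $\mathfrak{b}$.

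First, I would use the universal property of Verma modules: it suffices to show that $v := E_{-\alpha}^{m} v_{\lambda}^{\mathfrak{b}}$ is a $\mathfrak{b}$-singular vector of weight $s_{\alpha}\cdot_{\mathfrak{b}}\lambda$. The weight check is immediate since $(\rho^{\mathfrak{b}},\alpha^{\vee})=1$ for an even simple root $\alpha$, yielding $s_{\alpha}\cdot_{\mathfrak{b}}\lambda=\lambda-m\alpha$. For annihilation by $E_{\beta}$ with $\beta\in\Pi^{\mathfrak{b}}\setminus\{\alpha\}$, simplicity implies $\beta-\alpha\notin\Delta$ (the classical argument carries over to the super setting), so the super bracket $[E_{\beta},E_{-\alpha}]$ vanishes; since $E_{-\alpha}$ is even, this gives $E_{\beta}v=E_{-\alpha}^{m}E_{\beta}v_{\lambda}^{\mathfrak{b}}=0$. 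For $\beta=\alpha$, the standard $\mathfrak{sl}_{2}$-identity inside $U(\mathfrak{g}_{\bar{0}})$ gives $E_{\alpha}E_{-\alpha}^{m}v_{\lambda}^{\mathfrak{b}}=m(\langle \lambda,\alpha^{\vee}\rangle-m+1)E_{-\alpha}^{m-1}v_{\lambda}^{\mathfrak{b}}$, which vanishes because $\langle \lambda,\alpha^{\vee}\rangle=\langle \lambda+\rho^{\mathfrak{b}},\alpha^{\vee}\rangle-1=m-1$.

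Next, for injectivity I would invoke PBW: writing $U(\mathfrak{n}^{\mathfrak{b},-})\cong U(\mathfrak{n}^{\mathfrak{b},-}_{\bar{0}})\otimes\Lambda(\mathfrak{n}^{\mathfrak{b},-}_{\bar{1}})$ as a free left $U(\mathfrak{n}^{\mathfrak{b},-}_{\bar{0}})$-module and noting that $U(\mathfrak{n}^{\mathfrak{b},-}_{\bar{0}})$ is a domain, left multiplication by the nonzero even element $E_{-\alpha}^{m}$ is injective on $U(\mathfrak{n}^{\mathfrak{b},-})$. Since $M^{\mathfrak{b}}(\lambda)$ is a rank-one free $U(\mathfrak{n}^{\mathfrak{b},-})$-module on $v_{\lambda}^{\mathfrak{b}}$, this injectivity transfers, making $v$ a generator of a free rank-one submodule isomorphic to $M^{\mathfrak{b}}(s_{\alpha}\cdot_{\mathfrak{b}}\lambda)$, so the resulting homomorphism is injective.

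Finally, to bound $\dim\operatorname{Hom}\le 1$, I would identify this Hom space with the space of $\mathfrak{b}$-singular vectors in $M^{\mathfrak{b}}(\lambda)$ at weight $\lambda-m\alpha$. The hard part is that, unlike in the purely even case, this weight space may a priori receive contributions from PBW monomials involving several odd negative root vectors whose $\delta$-contributions cancel. I would handle this via the $\mathfrak{sl}_{2}$-triple $\langle E_{-\alpha},H_{\alpha},E_{\alpha}\rangle$: any $\mathfrak{b}$-singular vector is in particular annihilated by $E_{\alpha}$, hence lies in the lowest-weight subspace of the $\alpha$-$\mathfrak{sl}_{2}$-action. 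Decomposing $M^{\mathfrak{b}}(\lambda)$ into $\mathfrak{sl}_{2}$-isotypic components via an ordered PBW basis (even roots first) and tracking weights reduces the count to the classical $\mathfrak{g}_{\bar{0}}$-BGG statement $\dim\operatorname{Hom}(M_{\bar{0}}(s_{\alpha}\cdot\lambda),M_{\bar{0}}(\lambda))=1$. Alternatively, one may cite Musson's treatment in \cite{musson2012lie} directly.
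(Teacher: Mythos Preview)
The paper does not prove this lemma; it is stated with a citation to Musson and used as a black box. Your sketch is essentially the standard argument one finds there, so in that sense it matches the paper's ``proof'' (namely, deferring to the literature).

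Two small comments on your write-up. In Step~2 you need injectivity of \emph{right} multiplication by $E_{-\alpha}^{m}$ on $U(\mathfrak{n}^{\mathfrak b,-})$, not left: the induced map sends $u\,v^{\mathfrak b}_{s_\alpha\cdot_{\mathfrak b}\lambda}$ to $u\,E_{-\alpha}^{m}v^{\mathfrak b}_{\lambda}$. The fix is immediate: order the PBW basis with odd generators on the left, so that $U(\mathfrak{n}^{\mathfrak b,-})\cong \Lambda(\mathfrak{n}^{\mathfrak b,-}_{\bar 1})\otimes U(\mathfrak{n}^{\mathfrak b,-}_{\bar 0})$ as a right $U(\mathfrak{n}^{\mathfrak b,-}_{\bar 0})$-module; then right multiplication by $E_{-\alpha}^{m}$ is injective because $U(\mathfrak{n}^{\mathfrak b,-}_{\bar 0})$ is a domain.

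More importantly, you have over-complicated Step~3. There are no odd contributions to worry about: since $\alpha$ is a \emph{simple} root and the simple roots of $\mathfrak b$ are linearly independent, any expression $m\alpha=\sum_{\beta\in\Delta^{\mathfrak b,+}} c_\beta\,\beta$ with $c_\beta\ge 0$ forces every $\beta$ with $c_\beta>0$ to be a positive multiple of $\alpha$; in $\mathfrak{gl}(m|n)$ the only such root is $\alpha$ itself. Hence $\dim M^{\mathfrak b}(\lambda)_{\lambda-m\alpha}=1$, spanned by $E_{-\alpha}^{m}v^{\mathfrak b}_{\lambda}$, and the bound $\dim\operatorname{Hom}\le 1$ is immediate without any $\mathfrak{sl}_2$-isotypic analysis.
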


\begin{theorem}[\cite{musson2012lie}]\label{thm:even_order_embedding}
Let $\mathfrak{b}$ be a Borel subalgebra.
Let $\lambda, \mu \in \Lambda$ be  weights such that
\(
  \lambda - \mu \;\in\; \sum_{\alpha \in \Delta_{\overline{0}}^{+}} \mathbb{Z}_{\ge 0}\,\alpha.
\)
Then
\[
  \dim \operatorname{Hom}_{\mathfrak{g}}\bigl(M^{\mathfrak{b}}(\mu),\, M^{\mathfrak{b}}(\lambda)\bigr) \;\leq\; 1,
\]
and every nonzero homomorphism
\(
  M^{\mathfrak{b}}(\mu) \longrightarrow M^{\mathfrak{b}}(\lambda)
\)
is injective.
\end{theorem}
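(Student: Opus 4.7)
The plan combines the super-PBW theorem with a reduction to the classical Verma-module theorem for the reductive Lie algebra $\mathfrak g_{\bar 0}=\mathfrak{gl}(m)\oplus\mathfrak{gl}(n)$. Since $M^{\mathfrak b}(\lambda)$ is a free rank-one $U(\mathfrak n^{\mathfrak b-})$-module on $v_\lambda^{\mathfrak b}$, any $\mathfrak g$-homomorphism $\phi\colon M^{\mathfrak b}(\mu)\to M^{\mathfrak b}(\lambda)$ is determined by $\phi(v_\mu^{\mathfrak b})=q\cdot v_\lambda^{\mathfrak b}$ for a unique $q\in U(\mathfrak n^{\mathfrak b-})$ of weight $\mu-\lambda$ such that $q\cdot v_\lambda^{\mathfrak b}$ is $\mathfrak n^{\mathfrak b+}$-singular. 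The task splits into showing that $\phi$ is always injective and that the space of admissible $q$ is at most one-dimensional.

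For the bound $\dim\operatorname{Hom}\le 1$, I would project any $\mathfrak b$-singular vector in $M^{\mathfrak b}(\lambda)_\mu$ onto the classical Verma submodule $M_{\bar 0}(\lambda)=U(\mathfrak g_{\bar 0})\cdot v_\lambda^{\mathfrak b}\subset M^{\mathfrak b}(\lambda)$. Under the hypothesis $\lambda-\mu\in\sum_{\alpha\in\Delta^+_{\bar 0}}\mathbb Z_{\ge 0}\alpha$, the projection is a $\mathfrak b_{\bar 0}$-singular vector of weight $\mu$ in the $\mathfrak g_{\bar 0}$-Verma module $M_{\bar 0}(\lambda)$, whose space is at most one-dimensional by the classical Lie-algebra theorem. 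The odd-level corrections to the $\mathfrak b$-singular vector are then determined inductively by applying positive odd root vectors and solving a triangular linear system, so two $\mathfrak b$-singular vectors with proportional even projections must themselves be proportional.

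For injectivity, I would use the PBW ordering placing odd negative generators on the left, which realizes $U(\mathfrak n^{\mathfrak b-})\cong\Lambda(\mathfrak n^{\mathfrak b-}_{\bar 1})\otimes_k U(\mathfrak n^{\mathfrak b-}_{\bar 0})$ as a free right module over the Noetherian domain $U(\mathfrak n^{\mathfrak b-}_{\bar 0})$. In particular, $M^{\mathfrak b}(\mu)$ and $M^{\mathfrak b}(\lambda)$ are both torsion-free over this domain. If $\ker\phi\neq 0$, local nilpotence of $\mathfrak n^{\mathfrak b+}$ on $M^{\mathfrak b}(\mu)$ produces a $\mathfrak b$-singular vector in $\ker\phi$ of weight $\nu<\mu$; the resulting embedding $M^{\mathfrak b}(\nu)\hookrightarrow\ker\phi$ gives an element $p\in U(\mathfrak n^{\mathfrak b-})\setminus\{0\}$ with $pq=0$ in $U(\mathfrak n^{\mathfrak b-})$, and translating this into the $U(\mathfrak n^{\mathfrak b-}_{\bar 0})$-module structure on $M^{\mathfrak b}(\lambda)$ — using the torsion-freeness together with the even-cone hypothesis which controls the PBW shape of $q$ — produces a contradiction.

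The main obstacle I anticipate is the triangular bookkeeping of odd corrections in the general Borel case. For the distinguished and antidistinguished Borels, all positive odd roots share the same sign of $\varepsilon$-contribution, so the weight-support argument cleanly forces $q\in U(\mathfrak n^{\mathfrak b-}_{\bar 0})$ and the bookkeeping collapses. For a general $\mathfrak b\in L(m,n)$, where both $\varepsilon_i-\delta_j$ and $\delta_j-\varepsilon_i$ may occur as positive odd roots, such a direct weight argument fails (sums of two odd roots can land in the even root lattice), and I expect to need either an induction on the minimal number of odd reflections relating $\mathfrak b$ to the distinguished Borel $()$, tracking singular vectors across each step, or a direct Shapovalov-type nondegeneracy analysis. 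Atypical loci, where the projection onto $M_{\bar 0}(\lambda)$ may vanish, require a parallel argument using the leading nonzero odd-level component of the singular vector in place of the even projection.
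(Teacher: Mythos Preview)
The paper does not prove this theorem; it is quoted from Musson's book, so there is no in-paper argument to compare yours against.

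Your reduction to the even case is correct and complete when $\mathfrak b$ is the distinguished or antidistinguished Borel. There every negative odd root has the same sign in the $\mathbb Z$-grading $\deg(\varepsilon_i-\delta_j)=+1$, so the constraint $\lambda-\mu\in\mathbb Z_{\ge0}\Delta^+_{\bar0}$ forces $q\in U(\mathfrak n^-_{\bar0})$ outright; the bound $\dim\operatorname{Hom}\le1$ and injectivity then follow immediately from the classical Verma theorem and freeness of $U(\mathfrak n^{\mathfrak b-})$ over the domain $U(\mathfrak n^-_{\bar0})$, exactly as you describe.

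For a general $\mathfrak b\in L(m,n)$, however, what you have is a list of strategies with acknowledged gaps, not a proof, and two of the steps do not go through as stated. First, the ``projection onto $M_{\bar0}(\lambda)$'' along the PBW odd-grading is not $\mathfrak n^+_{\bar0}$-equivariant for a mixed Borel: the subspace $U(\mathfrak n^-_{\bar0})v^{\mathfrak b}_\lambda$ is a $\mathfrak g_{\bar0}$-submodule, but its PBW complement is not, because $\mathfrak n^{\mathfrak b-}_{\bar1}$ is not $\mathfrak g_{\bar0}$-stable. (In the $\delta\varepsilon\varepsilon\delta$ Borel of $\mathfrak{gl}(2|2)$, the even positive root vector $E_{\delta_1-\delta_2}$ sends the negative odd $E_{\delta_2-\varepsilon_1}$ to the \emph{positive} odd $E_{\delta_1-\varepsilon_1}$.) Hence the degree-$0$ component of a $\mathfrak b$-singular vector need not be $\mathfrak n^+_{\bar0}$-singular, and the ``triangular system'' for the higher odd levels has neither its triangularity nor the invertibility of its diagonal established. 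Second, for injectivity you need $q$ to be a non-zero-divisor, and the phrase ``the even-cone hypothesis controls the PBW shape of $q$'' does not deliver this: in the same Borel the monomial $E_{\varepsilon_2-\delta_1}E_{\delta_2-\varepsilon_1}$ has weight $-(\varepsilon_1-\varepsilon_2)-(\delta_1-\delta_2)$, which lies in the negative even cone, so $q\notin U(\mathfrak n^-_{\bar0})$ in general. Neither the odd-reflection induction nor the Shapovalov analysis you propose is actually carried out. For the general-$\mathfrak b$ statement you should consult Musson's argument directly.
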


\subsection{Kac functors \(K_{\ge0}\)}\label{subsec:kac-functors}

Recall that we represent a Borel subalgebrawith sandard even Borel subalgebra by a partition.
In particular, we denote ()
the \emph{distinguished Borel subalgebra}, and 
$(n^m)$ the \emph{anti-distinguished Borel subalgebra}.

Define
\(
\mathfrak g_{1}:=()_{\bar1}\qquad
\mathfrak g_{-1}:=(n^m)_{\bar1}.
\)

The general linear Lie superalgebra \( \mathfrak{gl}(m|n) \) admits a natural \( \mathbb{Z} \)-grading given by
\(
\mathfrak{gl}(m|n) = \mathfrak{g}_{-1} \oplus \mathfrak{g}_{\bar0} \oplus \mathfrak{g}_{1}.
\)
This grading satisfies the relations
\(
[\mathfrak{g}_1, \mathfrak{g}_1] = [\mathfrak{g}_{-1}, \mathfrak{g}_{-1}]=0.
\)
We define   \(
\mathfrak{g}_{\geq 0} := \mathfrak{g}_{\bar0} \oplus \mathfrak{g}_1,
\quad
\mathfrak{g}_{\leq 0} := \mathfrak{g}_{-1} \oplus \mathfrak{g}_{\bar0}.
\)

Define the inflation functor \(\mathrm{Infl}_{\mathfrak g_{\bar 0}}^{\mathfrak g_{\ge 0}}\colon 
\mathfrak g_{\bar 0}\text{-sMod}\to \mathfrak g_{\ge 0}\text{-sMod}\) by letting 
\(\mathfrak g_{1}\) act trivially. 
Define the functor \((\,)^{\mathfrak g_{1}}\colon 
\mathfrak g_{\ge 0}\text{-sMod}\to \mathfrak g_{\bar 0}\text{-sMod}\) 
by \(M\mapsto M^{\mathfrak g_{1}}:=\{\,m\in M\mid \mathfrak g_{1}m=0\,\}\) . 
Then  \(\mathrm{Infl}_{\mathfrak g_{\bar 0}}^{\mathfrak g_{\ge 0}}\) is left adjoint to \((\,)^{\mathfrak g_{1}}\).

The Kac functor is defined as 
\(K_{\ge 0}:=\mathrm{Ind}_{\mathfrak g_{\ge 0}}^{\mathfrak g}\circ 
\mathrm{Infl}_{\mathfrak g_{\bar 0}}^{\mathfrak g_{\ge 0}}:
\mathfrak g_{\bar 0}\text{-sMod}\to \mathfrak g\text{-sMod}\), which is exact. 
Then \(K_{\ge 0}\) is left adjoint to the exact functor 
\(\mathrm{Res}_{\ge 0}:=(\,)^{\mathfrak g_{1}}
\circ \mathrm{Res}_{\mathfrak g_{\ge 0}}^{\mathfrak g}:\mathfrak g\text{-sMod}\to \mathfrak g_{\bar 0}\text{-sMod}\).
If we denote by \(\mathrm{Coind}_{\mathfrak g_{\ge 0}}^{\mathfrak g}\) \cite{chen2021simple}
the right adjoint of \(\mathrm{Res}_{\mathfrak g_{\ge 0}}^{\mathfrak g}\), and define  \(\mathrm{Coind}_{\ge 0}:=\mathrm{Coind}_{\mathfrak g_{\ge 0}}^{\mathfrak g}\circ 
\mathrm{Infl}_{\mathfrak g_{\bar 0}}^{\mathfrak g_{\ge 0}}:
\mathfrak g_{\bar 0}\text{-sMod}\to \mathfrak g\text{-sMod}\) then there is an isomorphism of functors
\(
\mathrm{Coind}_{\ge 0}
\circ(-\otimes L_{\bar 0}(-2\rho_{\bar 1}))\cong
K_{\ge 0}
\), see \cite{chen2021simple}.  Note that \(L_{\bar 0}(-2\rho_{\bar 1}) \cong \bigwedge^{\dim \mathfrak g_{-1}} \mathfrak g_{-1}\) as \(\mathfrak g_{\bar 0}\)-modules.

Similarly, we define the dual Kac functor 
\(K_{\le 0}:=\mathrm{Ind}_{\mathfrak g_{\le 0}}^{\mathfrak g}\circ 
\mathrm{Infl}_{\mathfrak g_{\bar 0}}^{\mathfrak g_{\le 0}}:
\mathfrak g_{\bar 0}\text{-sMod}\to \mathfrak g\text{-sMod}\),
\(\mathrm{Res}_{\le 0}:=\mathrm{Res}_{\mathfrak g_{\le 0}}^{\mathfrak g}\circ(\,\cdot\,)^{\mathfrak g_{-1}}\) and
 \(\mathrm{Coind}_{\le 0}\).

\begin{lemma}\cite{germoni1998indecomposable}
    \(
K_{\ge 0}\cong(\,\cdot\,)^{\vee}\circ
\mathrm{Coind}_{\le 0}\circ(\,\cdot\,)^{\vee},
\qquad
K_{\le 0}\cong(\,\cdot\,)^{\vee}\circ
\mathrm{Coind}_{\ge 0}\circ(\,\cdot\,)^{\vee}.
\)

\end{lemma}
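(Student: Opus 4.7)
The plan is to prove the two duality isomorphisms by an explicit pairing construction using the antiautomorphism $\tau$, with the second isomorphism being entirely analogous to the first by swapping $\mathfrak g_{1}$ and $\mathfrak g_{-1}$; the category preservation will then drop out of PBW.

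First I would use PBW in the two orderings $U(\mathfrak g)\cong U(\mathfrak g_{\ge 0})\otimes \Lambda(\mathfrak g_{-1})$ and $U(\mathfrak g)\cong U(\mathfrak g_{\le 0})\otimes \Lambda(\mathfrak g_{1})$ to identify, for $V\in\mathcal W_{\bar 0}$, the underlying $\mathfrak g_{\bar 0}$-modules $K_{\ge 0}(V)\cong \Lambda(\mathfrak g_{-1})\otimes V$ and $\mathrm{Coind}_{\le 0}(V^{\vee})\cong \mathrm{Hom}_k(\Lambda(\mathfrak g_{1}),V^{\vee})$. Using that $(-)^{\vee}$ preserves characters (a consequence of $\tau(h)=-h$ for $h\in\mathfrak h$) together with the $\mathrm{Hom}$-weight formula, both sides acquire the same character $\prod_{\alpha\in\Delta^+_{\bar 1}}(1+e^{-\alpha})\,\operatorname{ch}V$; in particular all weight spaces are finite dimensional, so the restricted duals involved are well-defined and behave as expected.

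To upgrade this to a $\mathfrak g$-isomorphism I would define $\phi\colon K_{\ge 0}(V)\to (\mathrm{Coind}_{\le 0}(V^{\vee}))^{\vee}$ through the pairing
\[
  \langle u\otimes v,\, f\rangle\;:=\;\pm\,f(\tau(u))(v),
  \qquad u\in U(\mathfrak g),\ v\in V,\ f\in\mathrm{Hom}_{U(\mathfrak g_{\le 0})}(U(\mathfrak g),V^{\vee}).
\]
Well-definedness on the balanced tensor product uses $\tau(uy)=\tau(y)\tau(u)$ with $\tau(y)\in U(\mathfrak g_{\le 0})$ for $y\in U(\mathfrak g_{\ge 0})$, so the $U(\mathfrak g_{\le 0})$-equivariance of $f$ transports $\tau(y)$ across to match the balancing relation $uy\otimes v = u\otimes yv$. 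The $\mathfrak g$-equivariance of $\phi$ then reduces to the defining identity $(g\cdot f)(x)=-f(\tau(g)x)$ of contragredient duality. Bijectivity is checked weight by weight: both sides are finite dimensional in each weight, and non-degeneracy of the induced pairing reduces to non-degeneracy of the standard $\mathfrak g_{\bar 0}$-invariant pairing $\Lambda(\mathfrak g_{-1})\otimes \Lambda(\mathfrak g_{1})\to k$ induced by $\tau$ and of $V\otimes V^{\vee}\to k$. The final assertion that $K_{\ge 0}$ and $K_{\le 0}$ take $\mathcal W_{\bar 0}$ to $\mathcal W$ is then immediate from the decompositions $K_{\ge 0}(V)\cong \Lambda(\mathfrak g_{-1})\otimes V$ and $K_{\le 0}(V)\cong \Lambda(\mathfrak g_{1})\otimes V$, because the exterior algebras $\Lambda(\mathfrak g_{\mp 1})$ are finite dimensional with integral weights.

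The main obstacle is sign bookkeeping. Because $\tau$ is a super antiautomorphism with $\tau^{2}=(-1)^{|\cdot|}$ and the contragredient action already carries its own minus sign, making $\phi$ simultaneously balanced over $U(\mathfrak g_{\ge 0})$ and $\mathfrak g$-equivariant forces a specific parity twist of the form $(-1)^{|u||v|}$ or $(-1)^{\binom{|u|}{2}}$ in the formula; pinning down the correct factor is the delicate point, while the remainder is PBW bookkeeping and a routine weight-space argument.
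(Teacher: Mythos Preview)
The paper does not give its own proof of this lemma: it is stated with a citation to \cite{germoni1998indecomposable} and used as a black box. There is therefore no ``paper's proof'' to compare against; what one can say is that your construction is exactly the standard one underlying that citation. The pairing
\[
  \langle u\otimes v,\ f\rangle \;=\; \pm\, f(\tau(u))(v)
\]
is the right object, and the reason it works is precisely that $\tau$ interchanges $\mathfrak g_{\ge 0}$ and $\mathfrak g_{\le 0}$, so that the $U(\mathfrak g_{\ge 0})$-balancing on the induced side matches the $U(\mathfrak g_{\le 0})$-linearity built into $\mathrm{Coind}_{\le 0}$. Your diagnosis that the only genuine work is fixing the sign twist (coming from $\tau^{2}=(-1)^{|\cdot|}$ together with the $-1$ in the definition of the contragredient action) is accurate; once that twist is inserted, both the balancing and the $\mathfrak g$-equivariance go through by the computations you indicate. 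The nondegeneracy reduction to the pairing $\Lambda(\mathfrak g_{-1})\otimes\Lambda(\mathfrak g_{1})\to k$ induced by $\tau$ and to the evaluation $V\otimes V^{\vee}\to k$ is also the correct way to finish. The final claim about preservation of the weight category is exactly as you say, via the PBW isomorphisms $K_{\ge 0}(V)\cong \Lambda(\mathfrak g_{-1})\otimes V$ and $K_{\le 0}(V)\cong \Lambda(\mathfrak g_{1})\otimes V$ as $\mathfrak h$-modules.
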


\begin{proposition}\label{kacverma}
For every weight \(\lambda\) one has \(K_{\ge 0}\big(M_{\bar 0}(\lambda)\big)\cong M^{()}(\lambda)\) and \(K_{\le 0}\big(M_{\bar 0}(\lambda)\big)\cong M^{(n^{m})}(\lambda)\). 
Using \((\,\cdot\,)^{\vee}\), we have  \(\mathrm{Coind}_{\le 0}\big(M_{\bar 0}(\lambda)^{\vee}\big)\cong K_{\le 0}\big(M_{\bar 0}(\lambda-2\rho_{\bar 1})^{\vee}\big)\cong M^{()}(\lambda)^{\vee}\), and \(\mathrm{Coind}_{\ge 0}\big(M_{\bar 0}(\lambda)^{\vee}\big)\cong K_{\ge 0}\big(M_{\bar 0}(\lambda+2\rho_{\bar 1})^{\vee}\big)\cong M^{(n^{m})}(\lambda)^{\vee}\).
\end{proposition}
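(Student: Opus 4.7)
The plan is to establish the two ``primary'' isomorphisms $K_{\ge 0}(M_{\bar 0}(\lambda))\cong M^{()}(\lambda)$ and $K_{\le 0}(M_{\bar 0}(\lambda))\cong M^{(n^m)}(\lambda)$ directly by transitivity of induction, and then to deduce the four dual statements by combining contragredient duality with the tensor-shift identity $\mathrm{Coind}_{\ge 0}\circ(-\otimes L_{\bar 0}(-2\rho_{\bar 1}))\cong K_{\ge 0}$ recalled in the preceding paragraph.

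For the first part I would unpack definitions. The distinguished Borel satisfies $\mathfrak b^{()}=\mathfrak b_{\bar 0}\oplus\mathfrak g_{1}$, so the one-dimensional $\mathfrak b^{()}$-module $k_\lambda$ is exactly $k_\lambda$ viewed as a $(\mathfrak b_{\bar 0}\oplus\mathfrak g_1)$-module with $\mathfrak g_1$ acting by zero. A PBW argument together with a character count identifies $U(\mathfrak g_{\ge 0})\otimes_{U(\mathfrak b^{()})}k_\lambda$ with $\mathrm{Infl}_{\mathfrak g_{\bar 0}}^{\mathfrak g_{\ge 0}}(M_{\bar 0}(\lambda))$, and then transitivity of induction gives
\[
K_{\ge 0}(M_{\bar 0}(\lambda))
= U(\mathfrak g)\otimes_{U(\mathfrak g_{\ge 0})}\mathrm{Infl}(M_{\bar 0}(\lambda))
\cong U(\mathfrak g)\otimes_{U(\mathfrak b^{()})}k_\lambda
= M^{()}(\lambda).
\]
The parallel argument with $\mathfrak b^{(n^m)}=\mathfrak b_{\bar 0}\oplus\mathfrak g_{-1}$ yields $K_{\le 0}(M_{\bar 0}(\lambda))\cong M^{(n^m)}(\lambda)$.

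For the dual statements I would apply contragredient duality via $K_{\ge 0}\cong(-)^\vee\circ\mathrm{Coind}_{\le 0}\circ(-)^\vee$ and its $\le 0$ analogue; together with $M\xrightarrow{\sim}M^{\vee\vee}$ this immediately yields $\mathrm{Coind}_{\le 0}(M_{\bar 0}(\lambda)^\vee)\cong M^{()}(\lambda)^\vee$ and $\mathrm{Coind}_{\ge 0}(M_{\bar 0}(\lambda)^\vee)\cong M^{(n^m)}(\lambda)^\vee$. To rewrite these in terms of $K_{\le 0}$ and $K_{\ge 0}$ applied to shifted dual Vermas, I invoke the tensor-shift identity $\mathrm{Coind}_{\le 0}\circ(-\otimes L_{\bar 0}(2\rho_{\bar 1}))\cong K_{\le 0}$ (the antidistinguished analogue of the stated identity, which follows from the same reference by symmetry and the fact that $\rho_{\bar 1}^{(n^m)}=-\rho_{\bar 1}^{()}$). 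Since $L_{\bar 0}(\pm 2\rho_{\bar 1})$ is one-dimensional and one-dimensional weight modules satisfy $L_{\bar 0}(\nu)^\vee\cong L_{\bar 0}(\nu)$, the natural isomorphism $(M\otimes N)^\vee\cong M^\vee\otimes N^\vee$ gives $M_{\bar 0}(\mu)^\vee\otimes L_{\bar 0}(\nu)\cong M_{\bar 0}(\mu+\nu)^\vee$, and both remaining identifications drop out by direct substitution.

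I do not expect a real obstacle: once the Borel decompositions $\mathfrak b^{()}=\mathfrak b_{\bar 0}\oplus\mathfrak g_{1}$ and $\mathfrak b^{(n^m)}=\mathfrak b_{\bar 0}\oplus\mathfrak g_{-1}$ are recognised, all four asserted isomorphisms are formal consequences of induction-transitivity, duality, and one-dimensional tensor shifts. The only point requiring care is the sign bookkeeping in the $\pm 2\rho_{\bar 1}$ shifts when passing between the distinguished and antidistinguished versions of the tensor-shift identity.
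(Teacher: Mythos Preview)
Your argument is correct and is precisely the approach the paper intends: the proposition is stated in the ``Well known basics'' section without proof, but the phrasing ``Using $(\,\cdot\,)^{\vee}$, we have \ldots'' in the statement itself, together with the immediately preceding lemma $K_{\ge 0}\cong(\,\cdot\,)^{\vee}\circ\mathrm{Coind}_{\le 0}\circ(\,\cdot\,)^{\vee}$ and the tensor-shift identity $\mathrm{Coind}_{\ge 0}\circ(-\otimes L_{\bar 0}(-2\rho_{\bar 1}))\cong K_{\ge 0}$, makes clear that the intended proof is exactly the transitivity-of-induction plus duality route you give. There is nothing to compare; you have supplied the omitted standard argument.
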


\begin{proposition}( \cite{chen2021simple})\label{antidom}
For a weight \(\lambda\), the following are true:
\begin{enumerate}
\item  $K_{\ge 0}(L_{\bar 0}(\lambda))^{\vee}\cong K_{\le 0}(L_{\bar 0}(\lambda-2\rho_{\bar 1}))$,
\item $\operatorname{soc}K_{\ge 0}(L_{\bar 0}(\lambda))\cong L^{(n^m)}(\lambda-2\rho_{\bar 1})$,
\item $\operatorname{top}K_{\ge 0}(L_{\bar 0}(\lambda))\cong L^{()}(\lambda)$;
\item \( \dim \operatorname{Hom}(K_{\ge 0}(L_{\bar 0}(\lambda)), K_{\le 0}(L_{\bar 0}(\mu))) = \delta_{\lambda,\mu + 2\rho_{\bar 1}} \);
\item \(K_{\ge 0}(L_{\bar 0}(\lambda))\cong  K_{\le 0}(L_{\bar 0}(\lambda- 2\rho_{\bar 1})))  \) iff  \(K_{\ge 0}(L_{\bar 0}(\lambda))\cong  L(\lambda) \) iff \(L(\lambda) \) is typical.
\end{enumerate}\end{proposition}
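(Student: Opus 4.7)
The plan is to prove the five items in the order (3), (1), (2), (4), (5), each leaning on its predecessors. For (3), observe that $K_{\ge 0}(L_{\bar 0}(\lambda))$ is cyclic, generated by $1\otimes v_\lambda$, where $v_\lambda$ is the $\mathfrak b^{\mathrm{st}}_{\bar 0}$-highest weight vector of $L_{\bar 0}(\lambda)$. Since $\mathfrak g_1$ acts trivially after inflation and $v_\lambda$ is killed by $\mathfrak n^{\mathfrak b^{\mathrm{st}}_{\bar 0},+}$, the generator is annihilated by the entire positive nilradical $\mathfrak n^{(),+}=\mathfrak n^{\mathfrak b^{\mathrm{st}}_{\bar 0},+}\oplus\mathfrak g_1$; hence $K_{\ge 0}(L_{\bar 0}(\lambda))$ is a $()$-highest weight module of weight $\lambda$, with unique simple quotient $L^{()}(\lambda)$.

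For (1), plug the self-dual simple $L_{\bar 0}(\lambda)$ into the isomorphism $K_{\ge 0}\cong(-)^{\vee}\circ\mathrm{Coind}_{\le 0}\circ(-)^{\vee}$ to obtain $K_{\ge 0}(L_{\bar 0}(\lambda))^{\vee}\cong\mathrm{Coind}_{\le 0}(L_{\bar 0}(\lambda))$, then apply the mirror of the excerpt's identity $K_{\ge 0}\cong\mathrm{Coind}_{\ge 0}\circ(-\otimes L_{\bar 0}(-2\rho_{\bar 1}))$, namely $\mathrm{Coind}_{\le 0}(-)\cong K_{\le 0}(-\otimes L_{\bar 0}(-2\rho_{\bar 1}))$, and absorb the one-dimensional tensor factor into the weight shift to get $K_{\le 0}(L_{\bar 0}(\lambda-2\rho_{\bar 1}))$. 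For (2), dualize (1): $(-)^{\vee}$ swaps socle and top, simples are self-dual, and the $(n^m)$-analogue of (3) identifies $\operatorname{top}K_{\le 0}(L_{\bar 0}(\mu))\cong L^{(n^m)}(\mu)$, hence $\operatorname{soc}K_{\ge 0}(L_{\bar 0}(\lambda))\cong L^{(n^m)}(\lambda-2\rho_{\bar 1})$.

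For (4), use the adjunction $K_{\ge 0}\dashv\mathrm{Res}_{\ge 0}$ to reduce to $\operatorname{Hom}_{\mathfrak g_{\bar 0}}(L_{\bar 0}(\lambda),K_{\le 0}(L_{\bar 0}(\mu))^{\mathfrak g_1})$. By PBW, $K_{\le 0}(L_{\bar 0}(\mu))\cong\Lambda(\mathfrak g_1)\otimes L_{\bar 0}(\mu)$ as $\mathfrak g_{\bar 0}$-modules, and $\mathfrak g_1$ acts by left wedge multiplication on the first factor; the $\mathfrak g_1$-invariants are therefore $\Lambda^{\mathrm{top}}\mathfrak g_1\otimes L_{\bar 0}(\mu)\cong L_{\bar 0}(\mu+2\rho_{\bar 1})$, so the Hom collapses to $\operatorname{Hom}_{\mathfrak g_{\bar 0}}(L_{\bar 0}(\lambda),L_{\bar 0}(\mu+2\rho_{\bar 1}))=\delta_{\lambda,\mu+2\rho_{\bar 1}}$.

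For (5), I run the cycle (b)$\Rightarrow$(a)$\Rightarrow$(c)$\Rightarrow$(b). (b)$\Rightarrow$(a) is immediate from self-duality of $L(\lambda)$ combined with (1). For (a)$\Rightarrow$(c), comparing tops via (3) and its $(n^m)$-analogue forces $L^{()}(\lambda)\cong L^{(n^m)}(\lambda-2\rho_{\bar 1})$; by the standard odd-reflection theory the highest weight shifts by $-\alpha$ at each simple odd root $\alpha$ with $(\lambda+\rho^{\mathfrak b},\alpha)\ne 0$ and is fixed otherwise, so the full shift $-2\rho_{\bar 1}$ between $()$ and $(n^m)$ is realized precisely when no atypical isotropic wall is crossed, i.e.\ iff $\lambda$ is typical. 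The direction (c)$\Rightarrow$(b) is the classical Kac simplicity theorem for typical weights, and this is the main obstacle, since it cannot be bootstrapped from the earlier items alone: in the dominant integral case it is Kac's original result, and in general one recovers it by a character comparison (in the typical case the top of $K_{\ge 0}(L_{\bar 0}(\lambda))$ is $L(\lambda)$ by (3) and a direct calculation shows $\operatorname{ch}L(\lambda)=\operatorname{ch}L_{\bar 0}(\lambda)\prod_{\beta\in\Delta^{(),+}_{\bar 1}}(1+e^{-\beta})=\operatorname{ch}K_{\ge 0}(L_{\bar 0}(\lambda))$, forcing equality).
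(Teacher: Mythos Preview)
The paper does not prove this proposition; it is quoted from \cite{chen2021simple} without argument. So there is nothing to compare your approach against, and I will only comment on the soundness of your sketch.

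Items (3), (1), (2), (4) are handled correctly and efficiently, using exactly the functor identities and adjunctions recorded in the paper (the lemma on $K_{\ge 0}\cong(\,\cdot\,)^{\vee}\circ\mathrm{Coind}_{\le 0}\circ(\,\cdot\,)^{\vee}$ and the twist $\mathrm{Coind}_{\ge 0}\circ(-\otimes L_{\bar 0}(-2\rho_{\bar 1}))\cong K_{\ge 0}$, together with its $\le 0$ mirror). The computation $K_{\le 0}(L_{\bar 0}(\mu))^{\mathfrak g_1}\cong L_{\bar 0}(\mu+2\rho_{\bar 1})$ in (4) is the right way to collapse the adjunction.

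For (5), your cycle (b)$\Rightarrow$(a)$\Rightarrow$(c) is fine, and the odd-reflection argument for (a)$\Rightarrow$(c) is correct (any maximal chain from $()$ to $(n^m)$ has length $mn$ and meets each odd positive root exactly once as a simple root, so the total shift equals $-2\rho_{\bar 1}$ iff no step is atypical). The weak point is (c)$\Rightarrow$(b): invoking ``a direct calculation shows $\operatorname{ch}L(\lambda)=\operatorname{ch}L_{\bar 0}(\lambda)\prod_{\beta}(1+e^{-\beta})$'' is circular, since that character identity is essentially equivalent to the simplicity of $K_{\ge 0}(L_{\bar 0}(\lambda))$ you are trying to prove. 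A clean fix, using only what you have already established, is: if $\lambda$ is typical, the same odd-reflection computation gives $L^{()}(\lambda)\cong L^{(n^m)}(\lambda-2\rho_{\bar 1})$, so by (2) and (3) the socle and the top of $M:=K_{\ge 0}(L_{\bar 0}(\lambda))$ are both isomorphic to $L^{()}(\lambda)$. The socle therefore has nonzero $\lambda$-weight space; but $\dim M_\lambda=1$ and $M_\lambda$ generates $M$, hence $\operatorname{soc}M=M$ and $M$ is simple. This replaces the character comparison and closes the cycle.
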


\begin{theorem}\cite{chen2021translated}\label{socsimple}
For every \(M\in\mathfrak g_{\bar 0}\text{-sMod}\) which is finite length, one has \(\ell(\operatorname{soc}M)=\ell(\operatorname{soc}K_{\ge 0}M)=\ell(\operatorname{soc}K_{\le 0}M)\) and \(\ell(\operatorname{top}M)=\ell(\operatorname{top}K_{\ge 0}M)=\ell(\operatorname{top}K_{\le 0}M)\), where \(\ell(-)\) denotes the length.

In particular, for every  weight $\lambda \in \Lambda$,
 $\operatorname{soc} M^{()}(\lambda)$ is simple.
\end{theorem}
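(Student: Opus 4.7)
The strategy is: (i) establish the top equality $\ell(\operatorname{top}M)=\ell(\operatorname{top}K_{\ge 0}(M))$ via the adjunction $K_{\ge 0}\dashv\operatorname{Res}_{\ge 0}$; (ii) deduce the socle equality from contragredient duality together with the Berezinian-twist relation between $\operatorname{Coind}_{\le 0}$ and $K_{\le 0}$; (iii) extract the \emph{in particular} clause from the classical simplicity of $\operatorname{soc}M_{\bar 0}(\lambda)$.

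For (i), one first observes that every simple quotient $L$ of $K_{\ge 0}(M)$ is of the form $L^{()}(\mu)$: the image of $1\otimes M\subseteq K_{\ge 0}(M)$ in $L$ generates $L$ and lies in $L^{\mathfrak g_1}$, and any $\mathfrak g_{\bar 0}$-highest weight vector inside this image is automatically annihilated by $\mathfrak n^{\bar 0,+}\oplus\mathfrak g_1=\mathfrak n^{(),+}$, hence is a $\mathfrak b^{()}$-highest weight vector of $L$. The adjunction then yields
\[
\dim\operatorname{Hom}_{\mathfrak g}(K_{\ge 0}(M),L^{()}(\mu))=\dim\operatorname{Hom}_{\mathfrak g_{\bar 0}}(M,(L^{()}(\mu))^{\mathfrak g_1}),
\]
and the key identification $(L^{()}(\mu))^{\mathfrak g_1}\cong L_{\bar 0}(\mu)$ reduces the right-hand side to $[\operatorname{top}M:L_{\bar 0}(\mu)]$; summing over $\mu$ yields the equality. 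The analog for $K_{\le 0}$ follows by the mirror argument with the antidistinguished Borel.

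For (ii), contragredient duality converts socle length into top length. The lemma preceding \Cref{kacverma} gives $(K_{\ge 0}(M))^\vee\cong\operatorname{Coind}_{\le 0}(M^\vee)$, and using the $\le 0$ analog of the Berezinian twist, namely $\operatorname{Coind}_{\le 0}(N)\cong K_{\le 0}(N\otimes L_{\bar 0}(-2\rho_{\bar 1}))$, one rewrites this as $K_{\le 0}(M^\vee\otimes L_{\bar 0}(-2\rho_{\bar 1}))$. Since tensoring with the one-dimensional module $L_{\bar 0}(-2\rho_{\bar 1})$ preserves all length invariants and $\ell(\operatorname{top}N^\vee)=\ell(\operatorname{soc}N)$, the top equality applied to $K_{\le 0}$ gives $\ell(\operatorname{soc}K_{\ge 0}(M))=\ell(\operatorname{soc}M)$; the case of $K_{\le 0}$ is symmetric. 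For (iii), applying the result to $M=M_{\bar 0}(\lambda)$ and using \Cref{kacverma} to identify $K_{\ge 0}(M_{\bar 0}(\lambda))\cong M^{()}(\lambda)$, together with the classical fact that $\operatorname{soc}M_{\bar 0}(\lambda)$ is simple (unique Verma submodule in $\mathcal O_{\bar 0}$), yields $\ell(\operatorname{soc}M^{()}(\lambda))=1$.

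The main obstacle is the key identification $(L^{()}(\mu))^{\mathfrak g_1}\cong L_{\bar 0}(\mu)$. The inclusion $L_{\bar 0}(\mu)\subseteq(L^{()}(\mu))^{\mathfrak g_1}$ uses the $\mathfrak g_1$-invariance of $v_\mu^{()}$ together with the computation $[\mathfrak g_1,\mathfrak n^{\bar 0,-}]\subseteq\mathfrak g_1$, which ensures that $U(\mathfrak n^{\bar 0,-})v_\mu^{()}$ lies in $(L^{()}(\mu))^{\mathfrak g_1}$. For the reverse inclusion, any $\mathfrak g_{\bar 0}$-highest weight vector of $(L^{()}(\mu))^{\mathfrak g_1}$ is simultaneously a $\mathfrak b^{()}$-highest weight vector of the simple module $L^{()}(\mu)$, hence has weight $\mu$ and spans a one-dimensional subspace; combined with the vanishing of $\operatorname{Ext}^1_{\mathfrak g_{\bar 0}}(L_{\bar 0}(\mu),L_{\bar 0}(\mu))$ in $\mathcal O_{\bar 0}$ and the one-dimensionality of the $\mu$-weight space, this forces the claimed equality.
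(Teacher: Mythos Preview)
The paper does not prove this statement; it is quoted from \cite{chen2021translated} without argument. Your adjunction-plus-duality outline is the standard route and is essentially correct, and parts (ii) and (iii) go through as written.

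There is, however, a genuine gap in your treatment of the ``main obstacle'', the reverse inclusion $(L^{()}(\mu))^{\mathfrak g_1}\subseteq L_{\bar 0}(\mu)$. You correctly show that every $\mathfrak n_{\bar 0}^+$-invariant of $(L^{()}(\mu))^{\mathfrak g_1}$ has weight $\mu$, but the passage from this to the asserted equality via $\operatorname{Ext}^1_{\mathfrak g_{\bar 0}}(L_{\bar 0}(\mu),L_{\bar 0}(\mu))=0$ does not close: a module in $\mathcal O_{\bar 0}$ can have a one-dimensional space of highest-weight vectors and still possess composition factors $L_{\bar 0}(\nu)$ with $\nu\neq\mu$, because highest-weight vectors of subquotients need not lift. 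Your Ext argument would only kick in once all composition factors are known to be $L_{\bar 0}(\mu)$, which you have not established.

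The standard fix is to use the $\mathbb Z$-grading of $\mathfrak{gl}(m|n)$ (induced by the central grading element of $\mathfrak g_{\bar 0}$). Then $L^{()}(\mu)$ is graded, concentrated in nonpositive degrees, with degree-$0$ part equal to $L_{\bar 0}(\mu)$. Any nonzero $\mathfrak g_1$-invariant $v$ sitting in degree $-k<0$ would satisfy $U(\mathfrak g)v=U(\mathfrak g_{\le 0})U(\mathfrak g_1)v=U(\mathfrak g_{\le 0})v$, which lies entirely in degrees $\le -k$ and hence is a proper nonzero submodule, contradicting simplicity. Thus $(L^{()}(\mu))^{\mathfrak g_1}$ is contained in degree $0$; the opposite inclusion is automatic since nothing lives in positive degree. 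This replaces your Ext step and completes (i).
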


\begin{proposition}[\cite{mazorchuk2014parabolic,chen2021tilting}]\label{antidom}
For a weight \(\lambda\), the following are equivalent:

     0. \(\lambda \) is antidominant.
\begin{enumerate}
\item \(M_{\bar 0}(\lambda)\cong L_{\bar 0}(\lambda)\cong M_{\bar 0}(\lambda)^{\vee}\).
\item \(P_{\bar 0}(\lambda)\cong I_{\bar 0}(\lambda)\).
\item \(M^{()}(\lambda)\cong K_{\ge 0}(L_{\bar 0}(\lambda))\).
\item \(M^{()}(\lambda)\cong M^{(n^{m})}(\lambda-2\rho_{\bar 1})^{\vee}\).

\end{enumerate}
\end{proposition}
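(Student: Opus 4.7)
The plan is to separate the five equivalent conditions into a ``reductive'' block (antidominance together with (1) and (2), which are statements about $\mathfrak g_{\bar 0}=\mathfrak{gl}(m)\oplus\mathfrak{gl}(n)$ alone) and a ``super'' block ((3) and (4), which I will reduce to (1) by applying the exact Kac functor $K_{\ge 0}$ and using the identifications of \cref{kacverma}). The pattern of implications is antidominance$\,\Leftrightarrow\,$(1)$\,\Leftrightarrow\,$(3) and antidominance$\,\Leftrightarrow\,$(1)$\,\Leftrightarrow\,$(4), with antidominance$\,\Leftrightarrow\,$(2) handled separately.

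For antidominance$\,\Leftrightarrow\,$(1) I invoke the classical BGG fact that for the reductive $\mathfrak g_{\bar 0}$ the Verma module $M_{\bar 0}(\lambda)$ is simple precisely when $\lambda$ is antidominant. A short check using $\rho=\rho_{\bar 0}-\rho_{\bar 1}^{()}-\tfrac12(m+n-1)\,\mathrm{ber}$ confirms that the paper's convention $\lambda_1\le\cdots\le\lambda_m$, $\lambda_{m+1}\ge\cdots\ge\lambda_{m+n}$ is the usual $\mathfrak g_{\bar 0}$-antidominance (the sign flip on the $\delta$-side arises from the negative-definiteness of the form on $\operatorname{span}\{\delta_j\}$). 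The self-duality $L_{\bar 0}(\lambda)\cong L_{\bar 0}(\lambda)^\vee$ under $(-)^\vee$ is then automatic for any simple. For (1)$\,\Leftrightarrow\,$(3), I apply the exact functor $K_{\ge 0}$ to the canonical surjection $M_{\bar 0}(\lambda)\twoheadrightarrow L_{\bar 0}(\lambda)$; by \cref{kacverma} this produces the surjection $M^{()}(\lambda)\twoheadrightarrow K_{\ge 0}(L_{\bar 0}(\lambda))$. Since $K_{\ge 0}(-)\cong\Lambda(\mathfrak g_{-1})\otimes(-)$ as $\mathfrak g_{\bar 0}$-modules by PBW, the map is an isomorphism iff $M_{\bar 0}(\lambda)\to L_{\bar 0}(\lambda)$ is, i.e.\ iff (1) holds.

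For (1)$\,\Leftrightarrow\,$(4), the second half of \cref{kacverma} applied with the shift $\lambda\mapsto\lambda-2\rho_{\bar 1}$ reads $K_{\ge 0}(M_{\bar 0}(\lambda)^\vee)\cong M^{(n^m)}(\lambda-2\rho_{\bar 1})^\vee$, so (4) becomes $K_{\ge 0}(M_{\bar 0}(\lambda))\cong K_{\ge 0}(M_{\bar 0}(\lambda)^\vee)$. Recovering $V$ from $K_{\ge 0}(V)$ as its $\mathfrak g_{-1}$-coinvariants $K_{\ge 0}(V)/\mathfrak g_{-1}K_{\ge 0}(V)\cong V$ (a PBW computation) makes $K_{\ge 0}$ faithful on isomorphism classes of $\mathfrak g_{\bar 0}$-modules, so (4) is equivalent to $M_{\bar 0}(\lambda)\cong M_{\bar 0}(\lambda)^\vee$. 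Finally, comparing the top of $M_{\bar 0}(\lambda)$ with the socle of $M_{\bar 0}(\lambda)^\vee$ (both are $L_{\bar 0}(\lambda)$) and using that the $\lambda$-weight space of $M_{\bar 0}(\lambda)$ is one-dimensional and generates the whole module forces $M_{\bar 0}(\lambda)=L_{\bar 0}(\lambda)$, which is (1).

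The main obstacle is antidominance$\,\Leftrightarrow\,$(2). Using the contragredient identification $I_{\bar 0}(\lambda)\cong P_{\bar 0}(\lambda)^\vee$, condition (2) says that $P_{\bar 0}(\lambda)$ is self-dual under $(-)^\vee$; the classical characterization in each integral block of $\mathcal O_{\bar 0}$ is that the unique such indecomposable projective is the ``big projective--injective'' tilting module, whose top and socle are both the antidominant simple. This gives antidominance$\,\Rightarrow\,$(2) by producing this big module and (2)$\,\Rightarrow\,$antidominance by uniqueness within the block. These facts are precisely the content of the cited \cite{mazorchuk2014parabolic,chen2021tilting}, which I would invoke directly rather than reprove; all remaining steps are formal consequences of the Kac-functor formalism already assembled in the excerpt.
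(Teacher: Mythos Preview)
The paper does not prove this proposition; it is stated with a citation to \cite{mazorchuk2014parabolic,chen2021tilting} and no argument is given, so there is nothing in the paper to compare your proof against.

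That said, your sketch is sound. The reduction of (3) and (4) to (1) via the exact Kac functor and \cref{kacverma} is clean: the key observations that $K_{\ge0}(V)/\mathfrak g_{-1}K_{\ge0}(V)\cong V$ as $\mathfrak g_{\bar0}$-modules and that $K_{\ge0}$ multiplies characters by $\operatorname{ch}\Lambda(\mathfrak g_{-1})$ both make $K_{\ge0}$ reflect isomorphisms, which is exactly what you need. The equivalence of antidominance with (1) is the standard Verma-simplicity criterion in category~$\mathcal O$ for the reductive $\mathfrak g_{\bar0}$, and your sign-checking against the paper's $\rho$-convention is correct (the shift $\rho-\rho_{\bar0}$ is constant on each block of $\varepsilon$'s and $\delta$'s, so it does not affect the inequalities). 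For antidominance~$\Leftrightarrow$~(2) you correctly identify the relevant input as the characterization of self-dual indecomposable projectives in an integral block of~$\mathcal O_{\bar0}$ as the antidominant projective-injectives; deferring this to the cited references is exactly what the paper does for the whole statement.
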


\begin{theorem}[\cite{chen2023some}]\label{socmlam}
Let $\lambda$ be an integral weight . Then
\[
  \operatorname{Soc} M^{()}(\lambda)
  \;\cong\;
  L^{(n^m)}\bigl(\lambda^{\mathrm{antidom}}- 2\rho_{\overline{1}}\bigr).
\]
\end{theorem}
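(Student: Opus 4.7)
The plan is to reduce the problem to the classical computation of the socle of an even Verma module by transferring through the exact Kac functor $K_{\ge 0}$, and then to pin down the resulting simple submodule via \Cref{antidom}.

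First, I would rewrite the distinguished Verma as $M^{()}(\lambda) \cong K_{\ge 0}(M_{\bar 0}(\lambda))$ using \Cref{kacverma}. On the even side, I would invoke the classical BGG fact for the reductive Lie algebra $\mathfrak g_{\bar 0} = \mathfrak{gl}(m) \oplus \mathfrak{gl}(n)$: the socle of the Verma module $M_{\bar 0}(\lambda)$ is simple and isomorphic to $L_{\bar 0}(\lambda^{\mathrm{antidom}})$. (For $\mathfrak{gl}(m)\oplus\mathfrak{gl}(n)$ this is immediate by writing $M_{\bar 0}(\lambda)$ as an external tensor product of $\mathfrak{gl}(m)$ and $\mathfrak{gl}(n)$ Vermas and using that the most antidominant Verma in each dot orbit is already simple.)

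Next, since $K_{\ge 0}$ is exact, the embedding $L_{\bar 0}(\lambda^{\mathrm{antidom}}) \hookrightarrow M_{\bar 0}(\lambda)$ pushes forward to an embedding
\[
  K_{\ge 0}\bigl(L_{\bar 0}(\lambda^{\mathrm{antidom}})\bigr)
  \ \hookrightarrow\
  K_{\ge 0}\bigl(M_{\bar 0}(\lambda)\bigr)
  \ \cong\ M^{()}(\lambda).
\]
Taking socles and applying \Cref{antidom}(2) to $\mu = \lambda^{\mathrm{antidom}}$, I obtain a monomorphism
\[
  L^{(n^m)}\bigl(\lambda^{\mathrm{antidom}} - 2\rho_{\bar 1}\bigr)
  \ \cong\ \operatorname{soc} K_{\ge 0}\bigl(L_{\bar 0}(\lambda^{\mathrm{antidom}})\bigr)
  \ \hookrightarrow\ \operatorname{soc} M^{()}(\lambda),
\]
since any simple submodule of a submodule is a simple submodule of the ambient module.

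Finally, to upgrade this embedding to an isomorphism I would invoke \Cref{socsimple}, which yields
\[
  \ell\bigl(\operatorname{soc} M^{()}(\lambda)\bigr)
  = \ell\bigl(\operatorname{soc} K_{\ge 0}(M_{\bar 0}(\lambda))\bigr)
  = \ell\bigl(\operatorname{soc} M_{\bar 0}(\lambda)\bigr)
  = 1.
\]
Hence the socle of $M^{()}(\lambda)$ is itself simple, and the injection constructed above must be an isomorphism, giving the desired identification $\operatorname{Soc} M^{()}(\lambda) \cong L^{(n^m)}(\lambda^{\mathrm{antidom}} - 2\rho_{\bar 1})$. The only genuinely non-formal ingredient is the classical even statement $\operatorname{soc} M_{\bar 0}(\lambda) \cong L_{\bar 0}(\lambda^{\mathrm{antidom}})$; everything else in the argument is a bookkeeping application of exactness of $K_{\ge 0}$, \Cref{kacverma}, \Cref{antidom}, and the length equality in \Cref{socsimple}, so no serious obstacle is expected.
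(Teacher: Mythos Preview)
Your proof is correct and follows essentially the same route as the paper: both produce the embedding $L^{(n^m)}(\lambda^{\mathrm{antidom}}-2\rho_{\bar 1})\hookrightarrow M^{()}(\lambda)$ via (the Kac image of) the antidominant Verma and then invoke \Cref{socsimple} to conclude that the socle is simple. The only cosmetic difference is that the paper states the Verma embedding $M^{()}(\lambda^{\mathrm{antidom}})\hookrightarrow M^{()}(\lambda)$ directly, whereas you manufacture it by applying the exact functor $K_{\ge 0}$ to the classical even socle inclusion $L_{\bar 0}(\lambda^{\mathrm{antidom}})\hookrightarrow M_{\bar 0}(\lambda)$; since $K_{\ge 0}(L_{\bar 0}(\lambda^{\mathrm{antidom}}))\cong M^{()}(\lambda^{\mathrm{antidom}})$ by \Cref{antidom}, these are literally the same map.
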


\begin{proof}
There exists an embedding
\[
  M^{()}(\lambda^{\mathrm{antidom}}) \hookrightarrow M^{()}(\lambda)
.\]

Hence
\[
  \operatorname{Soc} M^{()}(\lambda)
  \supset
  \operatorname{Soc} M^{()}(\lambda^{\mathrm{antidom}})
  \;\cong\;
  L^{(n^m)}\bigl(\lambda - 2\rho_{\overline{1}}\bigr),
\]
where the last isomorphism follows from ~\ref{antidom}.
On the other hand, by \cref{socsimple} ,
the socle $\operatorname{Soc} M^{()}(\lambda)$ is simple. 
\end{proof}

\begin{corollary}\label{cor:embed-implies-dot-orbit}
Let $\lambda,\mu \in \Lambda$ be integral weights.
If there exists an embedding
\[
  M^{()}(\mu)\;\hookrightarrow\; M^{()}(\lambda),
\]
then $\mu \in  W\cdot \lambda$.
\end{corollary}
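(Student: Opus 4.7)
The plan is a short two-step argument that reads the corollary off of the preceding theorem on the socle of a distinguished Verma. First, I would observe that any embedding $M^{()}(\mu) \hookrightarrow M^{()}(\lambda)$ restricts to an embedding of socles, since injective maps send simple submodules to simple submodules. By \cref{socsimple} both socles are simple, so the induced map between them is forced to be an isomorphism of simple modules.

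Second, I would apply the preceding theorem to identify both sides explicitly:
\[
  \operatorname{Soc} M^{()}(\lambda) \cong L^{(n^m)}\bigl(\lambda^{\mathrm{antidom}} - 2\rho_{\bar 1}\bigr),
  \qquad
  \operatorname{Soc} M^{()}(\mu) \cong L^{(n^m)}\bigl(\mu^{\mathrm{antidom}} - 2\rho_{\bar 1}\bigr).
\]
An isomorphism between two simple highest-weight modules for the same (anti-distinguished) Borel subalgebra forces equality of their highest weights, giving $\mu^{\mathrm{antidom}} = \lambda^{\mathrm{antidom}}$. By definition $\mu^{\mathrm{antidom}} \in W\cdot\mu$ and $\lambda^{\mathrm{antidom}} \in W\cdot\lambda$, so the two dot-orbits intersect and must coincide. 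Hence $\mu \in W\cdot\lambda$.

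I do not expect a serious obstacle here: the corollary is essentially a direct repackaging of the socle theorem together with the simplicity of $\operatorname{Soc} M^{()}(-)$. The only pedantic point worth recording is that the three variants of the dot action agree on $\Lambda$, since $\rho^{()}_{\bar 1}$ and $\rho^{(n^m)}_{\bar 1}$ are $W$-invariant, as noted in the excerpt; thus there is no ambiguity in the meaning of $W\cdot\lambda$ in the statement.
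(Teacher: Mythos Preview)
Your proposal is correct and is exactly the argument the paper intends: the corollary is stated immediately after the socle theorem with no separate proof, and the only sensible deduction is precisely your comparison of socles via \cref{socsimple} and the preceding theorem to conclude $\mu^{\mathrm{antidom}}=\lambda^{\mathrm{antidom}}$. Your remark about the coincidence of the dot actions is also faithful to the paper's conventions.
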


\subsection{$\mathfrak{g}_{-1}$-generic weights}

To each regular dominant weight \( \lambda \in \Lambda\) (or \(L^{()}(\lambda)\)) we associate, following \cite{stroppel2012highest},  its weight diagram as following.

\(
I_{\times}(\lambda): = \{\lambda_1, \lambda_2 , \ldots, \lambda_m \}
\) and 
\(
I_{\circ}(\lambda) := \{\lambda_{m+1}, \lambda_{m+2}, \ldots,  \lambda_{m+n}\}.
\)
The integers in \( I_{\times}(\lambda) \cap I_{\circ}(\lambda) \) are labelled by \( \vee \), the remaining ones in \( I_{\times}(\lambda) \) respectively \( I_{\circ}(\lambda) \) are labelled by \( \times \) respectively \( \circ \). All other integers are labelled by a \( \wedge \).
This labelling of the number line \( \mathbb{Z} \) uniquely characterizes the weight \( \lambda \). 

\begin{definition}[Totally disconnected weights]
A regular dominant weight $\lambda \in \Lambda$ (or, equivalently, the
simple module $L^{()}(\lambda)$) is called \emph{totally disconnected} if
its weight diagram contains at least one ``$\wedge$’’ between any two
``$\vee$’’s.
\end{definition}

\begin{definition}[$\mathfrak{g}_{-1}$-generic weights]
A weight $\lambda \in \Lambda$ (or $L^{()}(\lambda)$) is called
\emph{$\mathfrak{g}_{-1}$-generic} if the set
\[
  \bigl\{\,
    \lambda - \textstyle\sum_{\alpha \in S} \alpha
    \,\bigm|\,
    S \subset \Delta_{\overline{1}}^{()+}
  \,\bigr\}
\]
lie in the same $\rho$–shifted Weyl chamber. 
\end{definition}

\begin{lemma}\label{lem:g-1-generic-properties}
Let $\lambda \in \Lambda$ be $\mathfrak{g}_{-1}$-generic. Then:
\begin{enumerate}
  \item For every $\mu \in W \cdot \lambda$ ,  $\mu$ is again $\mathfrak{g}_{-1}$-generic.
  \item $\lambda$ is regular
  \item If $\lambda$ is  dominant, then $\lambda$ is totally disconnected.
  \item For any Borel subalgebra $\mathfrak{b}\in L(m|n)$ we have
  \(
    \operatorname{Res}^{\mathfrak{g}}_{\mathfrak{g}_{\overline{0}}}
    M^{\mathfrak{b}}\bigl(\lambda + \rho^{()} - \rho^{\mathfrak{b}}\bigr)
    \;\cong\;
    \bigoplus_{S \subset \Delta_{\overline{1}}^{()+}}
      M_{\overline{0}}
      \Bigl(\lambda - \sum_{\alpha \in S} \alpha\Bigr).
  \)

\end{enumerate}
\end{lemma}
\begin{proof}
(1) and (2) are immediate from the definition.

(3) If $\lambda$ is not totally disconnected, then there exist $i$ and $j$ such that
$\lambda_i=\lambda_{m+j}=\lambda_{i+1}+1=\lambda_{m+j-1}+1$.
Then $\lambda$ and $\lambda-(\varepsilon_i-\delta_j)-(\varepsilon_{i}-\delta_{j-1})$ lie in different $\rho$--shifted Weyl chambers, contradicting $\mathfrak g_{-1}$--genericity.

(4) This follows from the fact that a Verma module has no nontrivial self-extension.
\end{proof}

\section{Narrow Verma modules $N^{()}(\lambda)$}

\begin{lemma}[\cite{musson2012lie}]\label{5.2.ch_eq}
For \( \lambda ,\lambda'\in \Lambda \), the following statements hold:
\begin{enumerate}
    \item \( \operatorname{ch} M^{()}(\lambda ) = \operatorname{ch} M^{(n^m)}(\lambda - 2\rho_{\bar1}) \);
    \item \( \dim \operatorname{Hom}(M^{()}(\lambda ), M^{(n^m)}(\lambda - 2\rho_{\bar1})) = 1 \);
    \end{enumerate}
\end{lemma}

\begin{definition}[Narrow Verma modules]
We define the \emph{narrow Verma module} associated $\lambda$ by
\[
  N^{()}(\lambda)
  \;:=\;
  \operatorname{Im}\Bigl(
 M^{()}(\lambda)
  \longrightarrow
  M^{(n^m)}\bigl(\lambda - 2\rho_{\overline{1}}\bigr).
\Bigr),
\]
where we take any nonzero homomorphism
$M^{()}(\lambda)
  \longrightarrow
  M^{(n^m)}\bigl(\lambda - 2\rho_{\overline{1}}\bigr).
$

\end{definition}

The following is well known.
\begin{proposition}\label{prop:Eg1}
Set
$E_{\mathfrak{g}_1}
 := \prod_{\alpha \in \Delta_{1}^{()+}} E_\alpha \in U(\mathfrak{g}_1)$.
Then the following statements hold.
\begin{enumerate}
  \item The element $E_{\mathfrak{g}_1}$ up to scalar, does not depend on the order of
        the product $\prod_{\alpha \in \Delta_{1}^{()+}} E_\alpha$.

  \item  up to scalar, $x\,E_{\mathfrak{g}_1} = E_{\mathfrak{g}_1}\,x$ for all
        $x \in U(\mathfrak{n}_{\overline{0}}^{()-})$.

  \item For every $\lambda \in \Lambda$, the narrow Verma module
        $N^{()}(\lambda)$ is the $\mathfrak{g}$-submodule of
        $M^{(n^m)}(\lambda - 2\rho_{\overline{1}})$ generated by
        $E_{\mathfrak{g}_1} v^{(n^m)}_{\lambda - 2\rho_{\overline{1}}}$; more precisely,
        \[
          N^{()}(\lambda)
          = U(\mathfrak{g})\,E_{\mathfrak{g}_1}\,
            v^{(n^m)}_{\lambda - 2\rho_{\overline{1}}}
          \subset
          M^{(n^m)}(\lambda - 2\rho_{\overline{1}}).
        \]
\end{enumerate}
\end{proposition}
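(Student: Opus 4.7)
The plan is to dispatch (1)--(3) via one common ``collision'' argument that exploits $[\mathfrak g_1,\mathfrak g_1]=0$. For (1), any two odd root vectors $E_\alpha,E_\beta$ with $\alpha,\beta\in\Delta_1^{()+}$ satisfy $E_\alpha E_\beta+E_\beta E_\alpha=[E_\alpha,E_\beta]=0$, so transposing adjacent factors of $\prod_{\alpha}E_\alpha$ flips the sign; hence any two orderings give the same element up to $\pm 1$.

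The heart of the argument, used for both (2) and (3), is the identity $[E_\beta,E_{\mathfrak g_1}]=0$ for every $\beta\in\Delta_{\bar 0}$. The key combinatorial input is that for $\alpha\in\Delta_1^{()+}$ and $\beta\in\Delta_{\bar 0}$, whenever $\alpha+\beta$ is a root one has $\alpha+\beta\in\Delta_1^{()+}$ (since $[\mathfrak g_{\bar 0},\mathfrak g_1]\subset\mathfrak g_1$ in the $\mathbb Z$-grading). Applying the super-Leibniz rule gives $[E_\beta,E_{\mathfrak g_1}]=\sum_k E_{\alpha_1}\cdots[E_\beta,E_{\alpha_k}]\cdots E_{\alpha_N}$, with no signs since $E_\beta$ is even. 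Each non-zero bracket $[E_\beta,E_{\alpha_k}]$ is a scalar multiple of some $E_{\alpha_j}$ that already appears in the product; anticommuting this factor past the intervening odd factors to meet the existing copy yields $E_{\alpha_j}^2=0$, so every summand vanishes. Thus $E_\beta E_{\mathfrak g_1}=E_{\mathfrak g_1}E_\beta$, and iterating over negative even roots proves (2).

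For (3), the formula $\rho^{\mathfrak b}=\rho_{\bar 0}-\rho_{\bar 1}^{\mathfrak b}-\tfrac12(m+n-1)\,\mathrm{ber}$ gives $\rho^{()}-\rho^{(n^m)}=-2\rho_{\bar 1}$, so \cref{5.2.ch_eq}(4) yields $\dim\operatorname{Hom}(M^{()}(\lambda),M^{(n^m)}(\lambda-2\rho_{\bar 1}))=1$. It therefore suffices to produce a nonzero homomorphism sending $v^{()}_\lambda$ to $w:=E_{\mathfrak g_1}\,v^{(n^m)}_{\lambda-2\rho_{\bar 1}}$, for then its image is exactly $U(\mathfrak g)\,w=N^{()}(\lambda)$. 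The weight of $w$ is $\lambda-2\rho_{\bar 1}+\sum_{\alpha\in\Delta_1^{()+}}\alpha=\lambda$, and non-vanishing is immediate from PBW in $U(\mathfrak n^{(n^m)-})$ (using $\mathfrak g_1=\mathfrak n^{(n^m)-}_{\bar 1}$). For the highest-weight condition: $E_\gamma w=0$ for $E_\gamma\in\mathfrak g_1$ by the collision argument of (1) ($E_\gamma$ already appears as a factor of $E_{\mathfrak g_1}$, forcing $E_\gamma^2=0$), while $E_\beta w=E_{\mathfrak g_1}E_\beta v^{(n^m)}_{\lambda-2\rho_{\bar 1}}=0$ for $E_\beta\in\mathfrak n_{\bar 0}^+$ by the commutation identity just proved. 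The only genuinely delicate point is this collision identity; once it is in hand, parts (2) and (3) follow almost by inspection.
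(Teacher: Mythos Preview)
Your argument is correct. The paper states this proposition as ``well known'' and gives no proof, so there is nothing to compare against; your direct verification via the collision argument (anticommutation in $\mathfrak g_1$ together with $[\mathfrak g_{\bar 0},\mathfrak g_1]\subset\mathfrak g_1$) is exactly the kind of proof one would supply here, and in fact shows exact commutation $E_\beta E_{\mathfrak g_1}=E_{\mathfrak g_1}E_\beta$ for all even root vectors $E_\beta$, which is slightly sharper than the ``up to scalar'' stated.
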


\begin{lemma}\label{lem:odd-reflection-square}
Let $\lambda \in \Lambda$ be an integral weight and let $w \in W$.
Assume we have embeddings
\[
  i_{()} \colon
  M^{()}\bigl(w\cdot\lambda\bigr)
  \longrightarrow
  M^{()}(\lambda),
\]
\[
  i_{(n^m)} \colon
  M^{(n^m)}\bigl(w\cdot\lambda - 2\rho_{\overline{1}}\bigr)
  \longrightarrow
  M^{(n^m)}\bigl(\lambda - 2\rho_{\overline{1}}\bigr),
\]
given as compositions of homomorphisms arising from even reflections.
Then the following natural diagram commutes:
\[
\begin{tikzpicture}[>=stealth]
  \node (A) at (0,-2.2)
    {$M^{()}\bigl(w\cdot\lambda \bigr)$};
  \node (B) at (0,0)
    {$M^{()}(\lambda )$};
  \node (C) at (6,0)
    {$M^{(n^m)}\bigl(\lambda - 2\rho_{\overline{1}}\bigr)$};
  \node (D) at (6,-2.2)
    {$M^{(n^m)}\bigl(w\cdot\lambda - 2\rho_{\overline{1}}\bigr)$};

  \draw[->] (A) -- node[left]  {$i_{()}$} (B);
  \draw[->] (D) -- node[right] {$i_{(n^m)}$} (C);
  \draw[->] (A) -- node[below] {$\neq 0$} (D);
  \draw[->] (B) -- node[above] {$\neq 0$} (C);
\end{tikzpicture}
\]
where the horizontal arrows denote the unique (up to scalar) nonzero
homomorphisms
\[
  M^{()}\bigl(w\cdot\lambda \bigr)
  \to
  M^{(n^m)}\bigl(w\cdot\lambda - 2\rho_{\overline{1}}\bigr),
  \qquad
  M^{()}(\lambda)
  \to
  M^{(n^m)}(\lambda - 2\rho_{\overline{1}}\bigr).
\]
\end{lemma}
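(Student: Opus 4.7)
The plan is to prove commutativity of the square up to a nonzero scalar---which is the only meaningful interpretation here, since the horizontal arrows are themselves only defined up to scalar---by reducing to the case of a single even simple reflection and then invoking the commutation relation of \cref{prop:Eg1}(2).

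First I would fix a reduced expression $w = s_{\alpha_1} s_{\alpha_2} \cdots s_{\alpha_k}$ in even simple reflections and factor $i_{()}$ and $i_{(n^m)}$ correspondingly into $k$ elementary even-reflection homomorphisms, each of the form produced by \cref{lem:even_embedding} (hence injective and unique up to scalar). Stacking the resulting $k$ single-reflection squares vertically reduces the problem, by induction on $k$, to the case $w = s_\alpha$ with $\alpha \in \Pi^{()}_{\overline{0}}$; the case $w = e$ is immediate.

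For this single-reflection case, set $r := \langle \lambda + \rho^{()}, \alpha^\vee \rangle \in \mathbb{Z}_{>0}$. Because $\rho^{(n^m)} - \rho^{()} = 2\rho_{\overline{1}}$, the analogous integer for the bottom row equals $r$ as well, so by \cref{lem:even_embedding} both vertical maps act on highest weight vectors by the rule $v \mapsto E_{-\alpha}^r v$. By \cref{prop:Eg1}(3) the two horizontal homomorphisms can be normalized so as to send
\[
  v^{()}_\lambda \longmapsto E_{\mathfrak{g}_1}\, v^{(n^m)}_{\lambda - 2\rho_{\overline{1}}},
  \qquad
  v^{()}_{s_\alpha\cdot\lambda} \longmapsto E_{\mathfrak{g}_1}\, v^{(n^m)}_{s_\alpha\cdot\lambda - 2\rho_{\overline{1}}}.
\]
Tracing $v^{()}_{s_\alpha\cdot\lambda}$ around the two sides of the square yields $E_{-\alpha}^r E_{\mathfrak{g}_1}\, v^{(n^m)}_{\lambda - 2\rho_{\overline{1}}}$ along the up-then-right path and $E_{\mathfrak{g}_1} E_{-\alpha}^r\, v^{(n^m)}_{\lambda - 2\rho_{\overline{1}}}$ along the right-then-down path. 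Since $E_{-\alpha}^r \in U(\mathfrak{n}^{()-}_{\overline{0}})$, \cref{prop:Eg1}(2) asserts that these two elements of $U(\mathfrak{g})\cdot v^{(n^m)}_{\lambda - 2\rho_{\overline{1}}}$ are proportional, and a homomorphism out of a Verma module is determined by its image on the highest weight vector, so the two compositions agree up to scalar on all of $M^{()}(s_\alpha \cdot \lambda)$.

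The only real delicacy I foresee is bookkeeping the scalar ambiguities---each horizontal map and the commutation in \cref{prop:Eg1}(2) is defined only up to nonzero scalar, and these accumulate into the overall scalar expressing commutativity of the full square. Nonvanishing of both composite maps is then automatic, since $i_{(n^m)}$ is injective as a composition of injective even-reflection maps and the bottom horizontal arrow is nonzero by construction.
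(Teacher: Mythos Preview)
Your argument is correct and is precisely the intended unpacking of the paper's one-line proof ``It follows from \cref{prop:Eg1}'': reduce to a single even simple reflection, realize the horizontal maps via $E_{\mathfrak{g}_1}$ as in \cref{prop:Eg1}(3), and then invoke the commutation $x\,E_{\mathfrak{g}_1}=E_{\mathfrak{g}_1}\,x$ (up to scalar) for $x\in U(\mathfrak{n}^{()-}_{\overline{0}})$ from \cref{prop:Eg1}(2). The only cosmetic slip is that the second path is ``right-then-up'' rather than ``right-then-down'' (the arrow $i_{(n^m)}$ points from $D$ to $C$), but your computation of the image as $E_{\mathfrak{g}_1}E_{-\alpha}^{r}\,v^{(n^m)}_{\lambda-2\rho_{\overline{1}}}$ is correct.
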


\begin{proof}
It follows from \cref{prop:Eg1}
\end{proof}

\begin{proposition}\label{prop:socle-narrow}
Let $\lambda \in \Lambda$ , then,
\[
  \operatorname{Soc} N^{()}(\lambda)
  \;\cong\;
  N^{()}(\lambda^{\mathrm{antidom}})
  \;\cong\;
  L^{()}(\lambda^{\mathrm{antidom}}).
\]
\end{proposition}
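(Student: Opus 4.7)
The plan is to prove the chain $\operatorname{Soc} N^{()}(\lambda)\cong N^{()}(\lambda^{\mathrm{antidom}})\cong L^{()}(\lambda^{\mathrm{antidom}})$ in three stages: first handle the antidominant case, then produce an embedding $N^{()}(\lambda^{\mathrm{antidom}})\hookrightarrow N^{()}(\lambda)$ via the square in \cref{lem:odd-reflection-square}, and finally invoke simplicity of the socle of the ambient antidistinguished Verma.

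First I would prove: if $\mu$ is antidominant, then $N^{()}(\mu)\cong L^{()}(\mu)$. By \cref{antidom}(4) (the antidominant equivalences) $M^{()}(\mu)\cong M^{(n^m)}(\mu-2\rho_{\bar1})^\vee$, so dualizing identifies $\operatorname{Soc} M^{(n^m)}(\mu-2\rho_{\bar1})$ with $L^{()}(\mu)$. By \cref{prop:Eg1}(3), $N^{()}(\mu)$ is generated inside $M^{(n^m)}(\mu-2\rho_{\bar1})$ by the weight-$\mu$ vector $E_{\mathfrak g_1}v^{(n^m)}_{\mu-2\rho_{\bar1}}$. A short check shows this vector is $\mathfrak n^{()+}$-highest: the even part $\mathfrak n^{()+}_{\bar0}=\mathfrak n^{(n^m)+}_{\bar0}$ kills $v^{(n^m)}_{\mu-2\rho_{\bar1}}$ and commutes with $E_{\mathfrak g_1}$ by \cref{prop:Eg1}(2) (or rather its even analogue, which follows from the same top-wedge argument), while the odd part $\mathfrak g_1$ kills $E_{\mathfrak g_1}$ by antisymmetry in $\Lambda^{mn}(\mathfrak g_1)$. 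A PBW weight count in $M^{(n^m)}(\mu-2\rho_{\bar1})$ shows its $\mu$-weight space is one-dimensional, so $E_{\mathfrak g_1}v^{(n^m)}_{\mu-2\rho_{\bar1}}$ must coincide, up to scalar, with the highest-weight vector of the socle $L^{()}(\mu)$. Hence $N^{()}(\mu)=L^{()}(\mu)$.

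Next, writing $\lambda^{\mathrm{antidom}}=w\cdot\lambda$ and factoring $w$ into even simple reflections, repeated application of \cref{lem:even_embedding} produces compatible embeddings $i_{()}\colon M^{()}(\lambda^{\mathrm{antidom}})\hookrightarrow M^{()}(\lambda)$ and $i_{(n^m)}\colon M^{(n^m)}(\lambda^{\mathrm{antidom}}-2\rho_{\bar1})\hookrightarrow M^{(n^m)}(\lambda-2\rho_{\bar1})$. The commutativity in \cref{lem:odd-reflection-square} then restricts $i_{(n^m)}$ to an injection $N^{()}(\lambda^{\mathrm{antidom}})\hookrightarrow N^{()}(\lambda)$. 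By the antidominant case just proved, this is an embedding of $L^{()}(\lambda^{\mathrm{antidom}})$ into $N^{()}(\lambda)$.

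Finally, by \cref{kacverma}, $M^{(n^m)}(\lambda-2\rho_{\bar1})\cong K_{\le0}(M_{\bar0}(\lambda-2\rho_{\bar1}))$, so \cref{socsimple} gives that $\operatorname{Soc} M^{(n^m)}(\lambda-2\rho_{\bar1})$ is simple. Since $N^{()}(\lambda)\subset M^{(n^m)}(\lambda-2\rho_{\bar1})$, its socle is contained in (and hence equal to, being nonzero by the previous step) this simple socle, which must therefore be $L^{()}(\lambda^{\mathrm{antidom}})$. Combining all three steps yields $\operatorname{Soc} N^{()}(\lambda)\cong N^{()}(\lambda^{\mathrm{antidom}})\cong L^{()}(\lambda^{\mathrm{antidom}})$. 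The only subtle point is the first step: one must carefully juggle the two Borel subalgebras $()$ and $(n^m)$ (which share their even parts but have opposite odd parts) to verify both that $E_{\mathfrak g_1}v^{(n^m)}_{\mu-2\rho_{\bar1}}$ is annihilated by $\mathfrak n^{()+}$ and that the weight-$\mu$ space of $M^{(n^m)}(\mu-2\rho_{\bar1})$ is one-dimensional; everything else is a formal consequence.
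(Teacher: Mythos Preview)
Your argument is correct. The paper states \cref{prop:socle-narrow} without proof; your three-step approach is the natural way to supply one, and it parallels the paper's proof of the immediately preceding socle computation for $M^{()}(\lambda)$ (embed from the antidominant point of the dot-orbit, then invoke simplicity of the socle of the ambient Verma via \cref{socsimple}). One minor simplification in Step~1: you do not actually need the direct verification that $E_{\mathfrak g_1}v^{(n^m)}_{\mu-2\rho_{\bar1}}$ is $\mathfrak n^{()+}$-annihilated. Once you know the $\mu$-weight space of $M^{(n^m)}(\mu-2\rho_{\bar1})$ is one-dimensional (which follows from \cref{5.2.ch_eq}) and that $\operatorname{Soc} M^{(n^m)}(\mu-2\rho_{\bar1})\cong L^{()}(\mu)$ (via \cref{antidom}), the $()$-highest weight vector of that socle is forced to equal $E_{\mathfrak g_1}v^{(n^m)}_{\mu-2\rho_{\bar1}}$ up to scalar, and $N^{()}(\mu)=U(\mathfrak g)\cdot E_{\mathfrak g_1}v^{(n^m)}_{\mu-2\rho_{\bar1}}=L^{()}(\mu)$ follows immediately.
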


\begin{proof}
It follows from \cref{socmlam}
\end{proof}

\begin{proposition}\label{prop:Res-N-narrow-summand}
Let $\lambda \in \Lambda$ be $\mathfrak{g}_{-1}$-generic.
Then,
\[
  \operatorname{Res}^{\mathfrak{g}}_{\mathfrak{g}_{\overline{0}}}
  N^{()}(\lambda)
  \quad\text{is a direct summand of}\quad
  \bigoplus_{S \subset \Delta_{\overline{1}}^{+}}
      M_{\overline{0}}
      \Bigl(\lambda - \sum_{\alpha \in S} \alpha\Bigr)
\]
as a $\mathfrak{g}_{\overline{0}}$-module.
\end{proposition}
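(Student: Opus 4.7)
The plan is to restrict the defining map $f\colon M^{()}(\lambda)\to M^{(n^m)}(\lambda-2\rho_{\overline{1}})$ to $\mathfrak{g}_{\overline{0}}$ and to observe that, under $\mathfrak{g}_{-1}$-genericity, both source and target acquire the same decomposition into $\mathfrak{g}_{\overline{0}}$-Verma modules whose pairwise Homs vanish between non-isomorphic summands. Consequently $\operatorname{Res} f$ becomes block-diagonal over $k$, and its image is automatically a direct summand.

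First I would apply \cref{lem:g-1-generic-properties}(3) to both Borels $()$ and $(n^m)$. Since $\rho^{()}-\rho^{(n^m)}=-2\rho_{\overline{1}}$, the two restrictions land in the same $\mathfrak{g}_{\overline{0}}$-module
\[
V\;:=\;\bigoplus_{S\subset\Delta_{\overline{1}}^{+}} M_{\overline{0}}\Bigl(\lambda-\sum_{\alpha\in S}\alpha\Bigr),
\]
i.e.\ $\operatorname{Res}^{\mathfrak{g}}_{\mathfrak{g}_{\overline{0}}} M^{()}(\lambda)\cong V\cong \operatorname{Res}^{\mathfrak{g}}_{\mathfrak{g}_{\overline{0}}} M^{(n^m)}(\lambda-2\rho_{\overline{1}})$. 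Exactness of restriction (\cref{indres}) gives $\operatorname{Res}^{\mathfrak{g}}_{\mathfrak{g}_{\overline{0}}} N^{()}(\lambda)=\operatorname{Im}(\operatorname{Res} f)$, where $\operatorname{Res} f$ is, after these identifications, a $\mathfrak{g}_{\overline{0}}$-endomorphism of $V$.

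Next I would compute $\operatorname{End}_{\mathfrak{g}_{\overline{0}}}(V)$. Set $\nu_S:=\lambda-\sum_{\alpha\in S}\alpha$; by $\mathfrak{g}_{-1}$-genericity all the $\nu_S$ lie in the same open $\rho_{\overline{0}}$-shifted Weyl chamber, so in particular are regular. The standard fact (Harish-Chandra central character plus the weight-shape of a Verma) forces any nonzero $\mathfrak{g}_{\overline{0}}$-hom $M_{\overline{0}}(\mu)\to M_{\overline{0}}(\mu')$ to satisfy $\mu\in W\cdot\mu'$; since a regular dot-orbit meets a fixed chamber in at most one point, we get $\operatorname{Hom}_{\mathfrak{g}_{\overline{0}}}(M_{\overline{0}}(\nu_S),M_{\overline{0}}(\nu_{S'}))=0$ whenever $\nu_S\ne\nu_{S'}$, while $\operatorname{End}_{\mathfrak{g}_{\overline{0}}}(M_{\overline{0}}(\mu))=k$ otherwise. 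Grouping summands of $V$ by weight then yields $\operatorname{End}_{\mathfrak{g}_{\overline{0}}}(V)\cong\prod_{\mu}\operatorname{Mat}_{d_\mu}(k)$ with $d_\mu:=|\{S:\nu_S=\mu\}|$, so $\operatorname{Res} f$ corresponds to a tuple $(A_\mu)$ of matrices and $\operatorname{Im}(\operatorname{Res} f)\cong \bigoplus_\mu M_{\overline{0}}(\mu)^{\operatorname{rank}(A_\mu)}$, a direct summand of $V\cong\bigoplus_\mu M_{\overline{0}}(\mu)^{d_\mu}$ because each $\operatorname{Im}(A_\mu)\subset k^{d_\mu}$ has a vector-space complement.

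The only real content is the Hom-vanishing step, which rests cleanly on the genericity hypothesis (all $\nu_S$ regular and in the same chamber); after that the claim reduces to elementary linear algebra and requires no case analysis, even if some of the $\nu_S$ happen to coincide.
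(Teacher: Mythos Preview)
Your proof is correct and follows the same approach as the paper: restrict the defining map, use \cref{lem:g-1-generic-properties}(3) to decompose both sides as the same direct sum of $\mathfrak{g}_{\overline{0}}$-Vermas with no nonzero Homs between non-isomorphic summands, and conclude that the image is a direct summand. Your treatment of possible coincidences $\nu_S=\nu_{S'}$ via the matrix-algebra description $\operatorname{End}(V)\cong\prod_\mu\operatorname{Mat}_{d_\mu}(k)$ is slightly more explicit than the paper's phrasing (``a projection onto and inclusion of some direct summands''), but the underlying argument is identical.
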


\begin{proof}
Apply the restriction functor $\operatorname{Res}_{\overline{0}}$ to a nonzero homomorphism $M^{()}(\lambda)\to M^{(n^m)}(\lambda - 2\rho_{\overline{1}})$. Then by \cref{prop:Res-N-narrow-summand}, $ \operatorname{Res}^{\mathfrak{g}}_{\mathfrak{g}_{\overline{0}}}
  M^{()}(\lambda)$ and $ \operatorname{Res}^{\mathfrak{g}}_{\mathfrak{g}_{\overline{0}}}
  M^{(n^m)}(\lambda - 2\rho_{\overline{1}})$ decompose as direct sums of $\mathfrak g_{\overline{0}}$–Verma modules which are either isomorphic or lie in different blocks. Since the endomorphism ring of a Verma module is one–dimensional and there are no nonzero homomorphisms between Vermas in different blocks, the induced map is just a projection onto (and inclusion of) some direct summands. In particular, on the summands we care about, it is given by taking direct summands.
\end{proof}

\section{BGG resolutions}

\begin{definition}
Let $\mathcal A$ be an abelian category. We write $\operatorname{ch}\mathcal A$ for the category of chain complexes in $\mathcal A$,
i.e.\ objects are sequences $C^\bullet=(\cdots\to C^{i-1}\xrightarrow{d^{i-1}}C^{i}\xrightarrow{d^{i}}C^{i+1}\to\cdots)$ in $\mathcal A$
with $d^{i}\circ d^{i-1}=0$, and morphisms are chain maps.
\end{definition}

Here, we recalls classical BGG resolution.

\begin{theorem}[BGG resolution]\label{classicalBGGreso}
Let $\mathfrak g=\mathfrak{gl}(m)\oplus\mathfrak{gl}(n)$ and let $\lambda$ be regular dominant integral.
Then there exists an exact sequence $BGG_{\overline{0}}(\lambda)$ of $\mathfrak g$--modules of the form
\[
0\to M_{\overline{0}}(w_0\cdot\lambda)\to
\bigoplus_{\ell(w)=\ell(w_0)-1} M_{\overline{0}}(w\cdot\lambda)\to\cdots\to
\bigoplus_{\ell(w)=1} M_{\overline{0}}(w\cdot\lambda)\to
M_{\overline{0}}(\lambda)\to L_{\overline{0}}(\lambda)\to 0.
\]
Moreover, $BGG_{\overline{0}}(\lambda)$ is indecomposable.
\end{theorem}

The following is our main result.

\begin{theorem}\label{narrowBGG}
Let $\mathfrak{g} = \mathfrak{gl}(m|n)$ and assume that $\lambda$ is
dominant integral and $\mathfrak{g}_{-1}$-generic. Then there exists an exact
sequence of $\mathfrak{g}$-modules
\[
  0 \to
  N^{()}(w_0 \cdot \lambda) \to
  \bigoplus_{\substack{w \in W \\ \ell(w) = \ell(w_0)-1}} N^{()}(w \cdot \lambda) \to
  \cdots \to
  \bigoplus_{\substack{w \in W \\ \ell(w) = 1}} N^{()}(w \cdot \lambda) \to
  N^{()}(\lambda) \to
  L^{()}(\lambda) \to 0.
\]
\end{theorem}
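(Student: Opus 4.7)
The plan is to construct the narrow BGG differentials by transporting the classical BGG differentials through the image construction, and then to reduce both the complex property and the exactness to the classical BGG theorem via the restriction functor $\operatorname{Res}^{\mathfrak g}_{\mathfrak g_{\bar 0}}$.

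\textbf{Construction of the candidate maps.} Each classical BGG differential is a signed sum of even-reflection embeddings $M_{\bar 0}(w'\cdot\lambda)\hookrightarrow M_{\bar 0}(w\cdot\lambda)$ for pairs $w<w'$ in $W$ with $\ell(w')=\ell(w)+1$. \cref{lem:even_embedding} lifts each such embedding to canonical embeddings $M^{()}(w'\cdot\lambda)\hookrightarrow M^{()}(w\cdot\lambda)$ and $M^{(n^m)}(w'\cdot\lambda-2\rho_{\bar 1})\hookrightarrow M^{(n^m)}(w\cdot\lambda-2\rho_{\bar 1})$, and \cref{lem:odd-reflection-square} (applied with $\lambda$ replaced by $w\cdot\lambda$) asserts that these fit into commutative squares with the canonical maps $M^{()}(\mu)\to M^{(n^m)}(\mu-2\rho_{\bar 1})$. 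Functoriality of the image therefore descends each embedding to a morphism $N^{()}(w'\cdot\lambda)\to N^{()}(w\cdot\lambda)$. I fix the signs to be those of the classical BGG resolution. The terminal map $N^{()}(\lambda)\to L^{()}(\lambda)$ is the canonical surjection onto the simple top, which exists because $N^{()}(\lambda)$ is a nonzero quotient of $M^{()}(\lambda)$.

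\textbf{Restriction and reduction to classical BGG.} Assuming the character identity $\operatorname{ch} N^{()}(\mu)=\operatorname{ch} M_{\bar 0}(\mu)\prod_{\beta\in\Delta_{\bar 1}^{+}\setminus\Gamma_\mu}(1+e^{-\beta})$ (which I derive separately from the Su--Zhang formula) and combining it with \cref{prop:Res-N-narrow-summand} and \cref{lem:g-1-generic-properties}, I identify
\[
\operatorname{Res}^{\mathfrak g}_{\mathfrak g_{\bar 0}} N^{()}(w\cdot\lambda)
\;\cong\;
\bigoplus_{T\subset\Delta_{\bar 1}^{+}\setminus\Gamma_{w\cdot\lambda}} M_{\bar 0}\Bigl(w\cdot\lambda-\sum_{\alpha\in T}\alpha\Bigr).
\]
Since $W=S_m\times S_n$ permutes $\Delta_{\bar 1}^{+}=\{\varepsilon_i-\delta_j\}$ and $\Gamma_{w\cdot\lambda}=w(\Gamma_\lambda)$, the substitution $T=w(T')$ rewrites this as $\bigoplus_{T'\subset\Delta_{\bar 1}^{+}\setminus\Gamma_\lambda}M_{\bar 0}(w\cdot(\lambda-\sum_{\alpha\in T'}\alpha))$, which is exactly the $w$-th term of $\bigoplus_{T'\subset\Delta_{\bar 1}^{+}\setminus\Gamma_\lambda}BGG_{\bar 0}(\lambda-\sum_{\alpha\in T'}\alpha)$.

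\textbf{Matching, exactness, and main obstacle.} For distinct $T'\subset\Delta_{\bar 1}^{+}\setminus\Gamma_\lambda$ the weights $w\cdot(\lambda-\sum_{T'}\alpha)$ lie in pairwise distinct $\mathfrak g_{\bar 0}$-linkage classes, a direct consequence of $\mathfrak g_{-1}$-genericity, so any $\mathfrak g_{\bar 0}$-morphism between the two direct sums above at consecutive BGG levels is block-diagonal, and by \cref{thm:even_order_embedding} each block Hom-space is at most one-dimensional. After a coherent normalization of scalars the restricted narrow differentials therefore agree with the classical BGG differentials summand by summand, so $\operatorname{Res}^{\mathfrak g}_{\mathfrak g_{\bar 0}}$ of the narrow complex is isomorphic to $\bigoplus_{T'}BGG_{\bar 0}(\lambda-\sum_{T'}\alpha)$. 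Each $\lambda-\sum_{T'}\alpha$ is regular dominant by $\mathfrak g_{-1}$-genericity, so \cref{classicalBGGreso} makes each summand exact; hence the restricted narrow complex is both a complex and exact, and since $\operatorname{Res}^{\mathfrak g}_{\mathfrak g_{\bar 0}}$ preserves underlying vector spaces it reflects both $d^2=0$ and exactness, yielding the theorem. The main technical obstacle is precisely this scalar bookkeeping: checking that the narrow differentials, built indirectly via image functoriality from the classical BGG maps, really do assemble (after one coherent normalization) into the classical BGG differentials per block. The $\mathfrak g_{-1}$-generic hypothesis is essential here because it is what forces the block-diagonality and one-dimensional Hom-spaces per block that make this comparison feasible.
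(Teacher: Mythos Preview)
Your argument has a \textbf{circularity}. You assume the character identity
\[
\operatorname{ch} N^{()}(\mu)=\operatorname{ch} M_{\bar 0}(\mu)\prod_{\beta\in\Delta_{\bar 1}^{+}\setminus\Gamma_\mu}(1+e^{-\beta})
\]
in order to pin down exactly which summands of $\bigoplus_{S\subset\Delta_{\bar 1}^+} M_{\bar 0}(w\cdot\lambda-\sum_S\alpha)$ appear in $\operatorname{Res}^{\mathfrak g}_{\mathfrak g_{\bar 0}} N^{()}(w\cdot\lambda)$. But in the paper this identity is \cref{thm:char-narrow}, and its proof \emph{uses} \cref{narrowBGG}: one first establishes the narrow resolution, then reads off $\operatorname{ch} L^{()}(\lambda)=\sum_w(-1)^{\ell(w)}\operatorname{ch} N^{()}(w\cdot\lambda)$, compares with Su--Zhang, and extracts the individual $\operatorname{ch} N^{()}(w\cdot\lambda)$ by linear independence of $\mathfrak g_{\bar 0}$-Verma characters. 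Your parenthetical ``which I derive separately from the Su--Zhang formula'' is thus the crux, and you give no argument for it; Su--Zhang computes $\operatorname{ch} L^{()}(\lambda)$ for dominant $\lambda$, not $\operatorname{ch} N^{()}(\mu)$ for an arbitrary $\mu$ in the dot-orbit.

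The paper sidesteps this by a weaker but sufficient step: it never needs to know \emph{which} summands occur. \cref{prop:Res-N-narrow-summand} already gives that $\operatorname{Res}^{\mathfrak g}_{\mathfrak g_{\bar 0}}\mathrm{BGG}^{()}(\lambda)$ is a direct summand, as a complex, of $\operatorname{Res}^{\mathfrak g}_{\mathfrak g_{\bar 0}}K_{\ge 0}\mathrm{BGG}_{\bar 0}(\lambda)\cong\bigoplus_{S\subset\Delta_{\bar 1}^+}\mathrm{BGG}_{\bar 0}(\lambda-\sum_S\alpha)$, and a direct summand of an exact complex is exact. The identification $\operatorname{Cok}(f)\cong L^{()}(\lambda)$ is then obtained by sandwiching against the dual-Kac side $K_{\le 0}\mathrm{BGG}_{\bar 0}(\lambda-2\rho_{\bar 1})$, giving an injection $\operatorname{Cok}(f)\hookrightarrow K_{\le 0}(L_{\bar 0}(\lambda-2\rho_{\bar 1}))$. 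Your route would become valid if you could compute $\operatorname{ch} N^{()}(\mu)$ directly from the image definition (for instance by determining which diagonal blocks of $\operatorname{Res} M^{()}(\mu)\to\operatorname{Res} M^{(n^m)}(\mu-2\rho_{\bar 1})$ are nonzero), but that is a genuine piece of work you have not supplied, and it is exactly what the paper's order of arguments is designed to avoid.
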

\begin{proof}
Applying the Kac functor to \cref{classicalBGGreso}, we obtain the following exact
sequence of $\mathfrak{g}$-modules \(K_{\ge 0}\mathrm{BGG}_{\bar0}(\lambda)\):
\[
\begin{aligned}
  0 &\to
  M^{()}(w_0 \cdot \lambda)
  \to
  \bigoplus_{\substack{w \in W \\ \ell(w) = \ell(w_0)-1}} M^{()}(w \cdot \lambda)
  \\
    &\to
  \cdots \to
  \bigoplus_{\substack{w \in W \\ \ell(w) = 1}} M^{()}(w \cdot \lambda)
  \to
  M^{()}(\lambda)\to
  K_{\ge 0}\bigl(L_{\overline{0}}(\lambda)\bigr)
  \to 0.
\end{aligned}
\]

Applying the dual Kac functor to the classical BGG resolution
\cref{classicalBGGreso}, we obtain the following exact
sequence of $\mathfrak{g}$-modules \(K_{\le 0}\mathrm{BGG}_{\overline{0}}(\lambda- 2\rho_{\overline{1}})\):

\[
\begin{aligned}
  0 &\to
  M^{(n^m)}\bigl(w_0 \cdot (\lambda - 2\rho_{\overline{1}})\bigr)
  \to
  \bigoplus_{\substack{w \in W \\ \ell(w) = \ell(w_0)-1}}
    M^{(n^m)}\bigl(w \cdot (\lambda - 2\rho_{\overline{1}})\bigr)
  \\
    &\to
  \cdots \to
  \bigoplus_{\substack{w \in W \\ \ell(w) = 1}}
    M^{(n^m)}\bigl(w \cdot (\lambda - 2\rho_{\overline{1}})\bigr)
  \to
  M^{(n^m)}\bigl(\lambda - 2\rho_{\overline{1}}\bigr)
  \to
  K_{\le 0}\bigl(L_{\overline{0}}(\lambda - 2\rho_{\overline{1}})\bigr)
  \to 0.
\end{aligned}
\]

By \cref{lem:odd-reflection-square}, we obtain the hom of complex \(K_{\ge 0}\mathrm{BGG}_{\bar0}(\lambda)\to  K_{\le 0}\mathrm{BGG}_{\overline{0}}(\lambda- 2\rho_{\overline{1}})\).

We obtain the image denoted by \(\mathrm{BGG}^{()}(\lambda)\):
\[
\begin{aligned}
  0 &\to
  N^{()}(w_0 \cdot \lambda)
  \to
  \bigoplus_{\substack{w \in W \\ \ell(w) = \ell(w_0)-1}}
    N^{()}(w \cdot \lambda)
  \\
    &\to
  \cdots \to
  \bigoplus_{\substack{w \in W \\ \ell(w) = 1}}
    N^{()}(w \cdot \lambda)
 \xrightarrow{f}
  N^{()}(\lambda)
\to
 \operatorname{Cok}(f)
  \to 0.
\end{aligned}
\]

In particular,  we obtain following commutative diagram.

\begin{tikzpicture}[xscale=2,yscale=1,baseline=(A11.base)]
  % 上段
  \node (A11) at (0,2) {$M^{()}(\lambda)$};
  \node (A12) at (3,2) {$N^{()}(\lambda)$};
  \node (A13) at (6,2) {$M^{(n^m)}\bigl(\lambda - 2\rho_{\overline{1}}\bigr)$};

  % 中段
  \node (A21) at (0,1) {$K_{\ge 0}\bigl(L_{\overline{0}}(\lambda)\bigr)$};
  \node (A22) at (3,1) {$\operatorname{Cok}(f)$};
  \node (A23) at (6,1) {$K_{\le 0}\bigl(L_{\overline{0}}(\lambda - 2\rho_{\overline{1}})\bigr)$};

  % 下段
  \node (A31) at (0,0) {$0$};
  \node (A32) at (3,0) {$0$};
  \node (A33) at (6,0) {$0$};

  % 横方向の矢印
  \draw[->] (A11) -- (A12);
  \draw[->] (A12) -- (A13);
  \draw[->] (A21) -- (A22);
  \draw[->] (A22) -- (A23);

  % 縦方向の矢印
  \draw[->] (A11) -- (A21);
  \draw[->] (A21) -- (A31);
  \draw[->] (A12) -- (A22);
  \draw[->] (A22) -- (A32);
  \draw[->] (A13) -- (A23);
  \draw[->] (A23) -- (A33);
\end{tikzpicture}

Applying the restriction functor $\mathrm{Res}_{\mathfrak g_{\bar 0}}^{\mathfrak g}$, we obtain a morphism of
complexes
\[
  \mathrm{Res}_{\mathfrak g_{\bar 0}}^{\mathfrak g}\bigl(K_{\ge 0}\mathrm{BGG}_{\overline{0}}(\lambda)\bigr)
  \;\longrightarrow\;
  \mathrm{Res}_{\mathfrak g_{\bar 0}}^{\mathfrak g}\bigl(\mathrm{BGG}^{()}(\lambda)\bigr)
  \;\longrightarrow\;
  \mathrm{Res}_{\mathfrak g_{\bar 0}}^{\mathfrak g}\bigl(K_{\le 0}\mathrm{BGG}_{\overline{0}}(\lambda - 2\rho_{\overline{1}})\bigr).
\]

By \cref{lem:g-1-generic-properties}, \[
  \operatorname{Res}^{\mathfrak g}_{\mathfrak g_{\bar 0}}
    \bigl(K_{\ge 0}\mathrm{BGG}_{\overline{0}}(\lambda)\bigr)
  \;\cong\;
  \operatorname{Res}^{\mathfrak g}_{\mathfrak g_{\bar 0}}
    \bigl(K_{\le 0}\mathrm{BGG}_{\overline{0}}(\lambda - 2\rho_{\overline{1}})\bigr)
  \;\cong\;
   \bigoplus_{S \subset \Delta_{\overline{1}}^{()+}}
     \mathrm{BGG}_{\overline{0}} \Bigl(\lambda - \sum_{\alpha \in S} \alpha\Bigr).
\]

By \cref{lem:odd-reflection-square} and \cref{prop:Res-N-narrow-summand},
the complex $\mathrm{Res}_{\mathfrak g_{\bar 0}}^{\mathfrak g}\bigl(\mathrm{BGG}^{()}(\lambda)\bigr)$ is a
direct summand of both
$\mathrm{Res}_{\mathfrak g_{\bar 0}}^{\mathfrak g}\bigl(K_{\ge 0}\mathrm{BGG}_{\overline{0}}(\lambda)\bigr)$
and
$\mathrm{Res}_{\mathfrak g_{\bar 0}}^{\mathfrak g}\bigl(K_{\le 0}\mathrm{BGG}_{\overline{0}}(\lambda - 2\rho_{\overline{1}})\bigr)$.

Hence the canonical homomorphism
\[
   \operatorname{Cok}(f)
  \longrightarrow
  K_{\le 0}\bigl(L_{\overline{0}}(\lambda - 2\rho_{\overline{1}})\bigr)
\]
is injective.

In particular,
\[
   \operatorname{Cok}(f)
  \;\cong\;
  L^{()}(\lambda).
\]

The exactness of
$\mathrm{Res}_{\mathfrak g_{\bar 0}}^{\mathfrak g}\bigl(\mathrm{BGG}^{()}(\lambda)\bigr)$ follows from the
exactness of $\bigoplus_{S \subset \Delta_{\overline{1}}^{()+}}
     \mathrm{BGG}_{\overline{0}} \Bigl(\lambda - \sum_{\alpha \in S} \alpha\Bigr)$.

Finally, the exactness of $\mathrm{BGG}^{()}(\lambda)$ itself follows from the
exactness of
$\mathrm{Res}_{\mathfrak g_{\bar 0}}^{\mathfrak g}\bigl(\mathrm{BGG}^{()}(\lambda)\bigr)$.

\end{proof}

Denote by $\Gamma_\lambda$ the set of atypical ()-positive roots of $\lambda$:
\[
  \Gamma_\lambda
  = \bigl\{\, \alpha\in\Delta_{\bar{1}}^{()+},\bigm|\,
      (\lambda + \rho,\ \alpha) = 0
    \bigr\}.
\]

Note that \(
  w \Gamma_\lambda = \Gamma_{w \cdot \lambda}
\) for all \(w \in W\).

\begin{theorem}[\cite{su2007character,chmutov2014weyl}]\label{thm:char-totally-disconnected}
Let $\lambda \in \Lambda$ be totally disconnected.
Then,
\begin{align*}
  \operatorname{ch} L^{()}(\lambda)
  &=
  \frac{\displaystyle\prod_{\beta \in \Delta_{\overline{1}}^{()+}} (1 + e^{-\beta})}
       {\displaystyle\prod_{\gamma \in \Delta_{\overline{0}}^{+}} (1 - e^{-\gamma})}
  \sum_{w \in W} (-1)^{\ell(w)}
    \frac{ e^{w \cdot \lambda} }
         {\displaystyle\prod_{\beta \in \Gamma_{w \cdot \lambda}} (1 + e^{-\beta})}
  \\
  &=
  \sum_{w \in W} (-1)^{\ell(w)} e^{w \cdot \lambda}\,
  \frac{\displaystyle\prod_{\beta \in \Delta_{\overline{1}}^{()+} \setminus \Gamma_{w \cdot \lambda}} (1 + e^{-\beta})}
       {\displaystyle\prod_{\gamma \in \Delta_{\overline{0}}^{+}} (1 - e^{-\gamma})}
  \\
  &=
  \sum_{w \in W} (-1)^{\ell(w)}
    \frac{\operatorname{ch} M^{()}(w \cdot \lambda)}
         {\displaystyle\prod_{\beta \in \Gamma_{w \cdot \lambda}} (1 + e^{-\beta})}.
\end{align*}
\end{theorem}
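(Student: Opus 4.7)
The plan is to deduce this directly from the Su--Zhang character formula in the simplified form for totally disconnected weights worked out in \cite{chmutov2014weyl}. Conceptually I would treat the three displayed expressions as different rearrangements of a single rational identity, so that only one of them actually needs to be matched against $\operatorname{ch} L^{()}(\lambda)$.

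First, I would verify that the three expressions coincide as formal rational expressions, independently of any hypothesis on $\lambda$. The key algebraic input is the Verma character formula
\[
  \operatorname{ch} M^{()}(\mu)
  = e^{\mu}\cdot
    \frac{\prod_{\beta\in\Delta_{\overline{1}}^{()+}}(1+e^{-\beta})}
         {\prod_{\gamma\in\Delta_{\overline{0}}^{+}}(1-e^{-\gamma})},
\]
which follows from the PBW decomposition $U(\mathfrak n^{(),-})\cong \Lambda(\mathfrak n_{\overline 1}^{(),-})\otimes S(\mathfrak n_{\overline 0}^{(),-})$. Substituting this into the third displayed expression and cancelling the factors $(1+e^{-\beta})$ for $\beta\in\Gamma_\lambda\subset \Delta_{\overline{1}}^{()+}$ yields the middle expression; pulling the common factor $\prod_{\beta\in\Delta_{\overline{1}}^{()+}}(1+e^{-\beta})/\prod_{\gamma\in\Delta_{\overline{0}}^{+}}(1-e^{-\gamma})$ back outside the Weyl sum --- after reinserting $\prod_{\beta\in\Gamma_\lambda}(1+e^{-\beta})$ in the denominator --- then recovers the first expression.

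Second, I would identify the resulting common value with $\operatorname{ch} L^{()}(\lambda)$ by quoting the main result of \cite{su2007character,chmutov2014weyl}. Their argument starts from the Kac--Wakimoto formula, whose awkward factor $\frac{1}{\operatorname{atyp}(\lambda)!}$ is shown to cancel under the totally disconnected hypothesis: the Weyl sum breaks into $\operatorname{atyp}(\lambda)!$ equal contributions indexed by a symmetric-group action permuting the atypical roots $\Gamma_\lambda$, and this combinatorial symmetry exactly absorbs the factorial. This cancellation is the substantive content of those papers; since the manipulations in the first step are purely formal, it is also the only place where the hypothesis on $\lambda$ enters, and therefore the genuine obstacle that would have to be confronted in any independent proof.
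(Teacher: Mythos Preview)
The paper does not supply its own proof of this theorem: it is stated as a quotation from \cite{su2007character,chmutov2014weyl} and used as a black box in the proof of \cref{thm:char-narrow}. Your plan --- verify that the three displayed expressions are formally equal via the Verma character formula, then invoke the cited references for the identification with $\operatorname{ch} L^{()}(\lambda)$ --- is therefore exactly in line with how the paper treats this result, and your formal manipulation is correct.

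One inaccuracy worth flagging, though it does not affect the validity of your citation: your sketch of how \cite{su2007character,chmutov2014weyl} obtain the formula is not right. Su--Zhang do not start from the Kac--Wakimoto formula; their 2007 paper predates any proof of the Kac--Wakimoto conjecture (those came around 2015--2017, as the present paper notes), and their argument instead begins from Brundan's Kazhdan--Lusztig--type expansion in Kac modules and extracts a closed Weyl-type sum by combinatorial cancellation. Moreover, the Kac--Wakimoto formula is \emph{already} a formula for totally disconnected (= Kac--Wakimoto) weights and carries the $1/\operatorname{atyp}(\lambda)!$ factor with $w$ acting on the product over $\Gamma_\lambda$; the formula in this theorem is a genuinely different rearrangement (no factorial, $\Gamma_\lambda$ outside the $w$-action), not the result of ``the totally disconnected hypothesis'' killing a factorial in Kac--Wakimoto. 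Since you are only quoting the result, this misreading of the literature is harmless for the present purpose, but you should not present it as the content of \cite{su2007character}.
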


\begin{theorem}\label{thm:char-narrow}
Let $\lambda \in \Lambda$ be $\mathfrak{g}_{-1}$-generic.
Then the character of the narrow Verma module $N^{()}(\lambda)$ is
\[
  \operatorname{ch} N^{()}(\lambda)
  = e^{\lambda}\,
     \frac{\displaystyle\prod_{\beta \in \Delta_{\overline{1}}^{()+} \setminus \Gamma_{w \cdot \lambda}}
                   (1 + e^{-\beta})}
          {\displaystyle\prod_{\gamma \in \Delta_{\overline{0}}^{+}}
                   (1 - e^{-\gamma})} \\
  = \frac{\displaystyle \operatorname{ch} M^{()}(\lambda)}
          {\displaystyle\prod_{\beta \in \Gamma_{w \cdot \lambda}} (1 + e^{-\beta})}.
\]
\end{theorem}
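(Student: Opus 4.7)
The plan is to combine three ingredients: Proposition~\ref{prop:Res-N-narrow-summand} constrains the shape of $\operatorname{ch} N^{()}(\lambda)$; the BGG resolution from Theorem~\ref{narrowBGG} yields an Euler-characteristic identity; and the Su--Zhang formula of Theorem~\ref{thm:char-totally-disconnected} supplies the target value. The two displayed equalities in the statement are algebraically equivalent via $\operatorname{ch} M^{()}(\lambda)=e^{\lambda}\prod_{\beta\in\Delta_{\overline{1}}^{()+}}(1+e^{-\beta})/\prod_{\gamma\in\Delta_{\overline{0}}^{+}}(1-e^{-\gamma})$, so it suffices to establish either one.

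First, for every $\mathfrak{g}_{-1}$-generic weight $\mu$, Proposition~\ref{prop:Res-N-narrow-summand} produces a subset $\mathcal{S}_\mu\subseteq 2^{\Delta_{\overline{1}}^{()+}}$ with
\[
  \operatorname{ch} N^{()}(\mu)\;=\;\frac{e^{\mu}\,Q_\mu}{\prod_{\gamma\in\Delta_{\overline{0}}^{+}}(1-e^{-\gamma})},
  \qquad
  Q_\mu\;:=\;\sum_{S\in\mathcal S_\mu}e^{-\sum_{\alpha\in S}\alpha}.
\]
Setting $D_\mu:=\prod_{\beta\in\Delta_{\overline{1}}^{()+}\setminus\Gamma_\mu}(1+e^{-\beta})$, the theorem reduces to showing $Q_\mu=D_\mu$ for every such $\mu$.

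Next, I would pass to the dominant representative $\lambda^{\mathrm{dom}}$ of the dot-orbit (which is totally disconnected by Lemma~\ref{lem:g-1-generic-properties}(2)), take the Euler characteristic of the BGG resolution of $L^{()}(\lambda^{\mathrm{dom}})$ from Theorem~\ref{narrowBGG}, and equate it with the Su--Zhang expression from Theorem~\ref{thm:char-totally-disconnected}. After clearing the common denominator $\prod_{\gamma}(1-e^{-\gamma})$, both sides become alternating $W$-sums and subtraction yields
\[
  \sum_{w\in W}(-1)^{\ell(w)}\,e^{w\cdot\lambda^{\mathrm{dom}}}\,\bigl(Q_{w\cdot\lambda^{\mathrm{dom}}}-D_{w\cdot\lambda^{\mathrm{dom}}}\bigr)\;=\;0.
\]

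The crucial step is then to peel off each $w$-summand individually. By Lemma~\ref{lem:g-1-generic-properties}(5) every $w\cdot\lambda^{\mathrm{dom}}$ is itself $\mathfrak{g}_{-1}$-generic, so all weights appearing in the $w$-th summand, namely those of the form $w\cdot\lambda^{\mathrm{dom}}-\sum_{\alpha\in S}\alpha$ with $S\subseteq\Delta_{\overline{1}}^{()+}$, lie entirely in the single $\rho$-shifted Weyl chamber that contains $w\cdot\lambda^{\mathrm{dom}}$. Since $\lambda^{\mathrm{dom}}$ is regular, distinct $w$'s produce distinct chambers, so the supports of these summands are pairwise disjoint. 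The vanishing of the total sum therefore forces $Q_{w\cdot\lambda^{\mathrm{dom}}}=D_{w\cdot\lambda^{\mathrm{dom}}}$ for every $w\in W$. Because every $\mathfrak{g}_{-1}$-generic weight is of the form $w\cdot\lambda^{\mathrm{dom}}$ for some $w$, this proves $Q_\lambda=D_\lambda$ in general. I expect the support-separation step to be the main obstacle, since it relies crucially on the $W$-stable form of $\mathfrak{g}_{-1}$-genericity (Lemma~\ref{lem:g-1-generic-properties}(5)) rather than on mere total disconnectedness of $\lambda^{\mathrm{dom}}$.
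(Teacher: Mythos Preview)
Your proposal is correct and follows essentially the same route as the paper's own proof: both combine Proposition~\ref{prop:Res-N-narrow-summand}, the narrow BGG resolution (Theorem~\ref{narrowBGG}), and the Su--Zhang formula (Theorem~\ref{thm:char-totally-disconnected}), and then separate the resulting alternating $W$-sum term by term using that, by $\mathfrak{g}_{-1}$-genericity, the weights $w\cdot\lambda^{\mathrm{dom}}-\sum_{\alpha\in S}\alpha$ for fixed $w$ stay in a single $\rho$-shifted chamber. Your write-up is in fact a bit more explicit than the paper's on two points: you spell out the chamber-disjointness argument (the paper simply asserts linear independence of the $\operatorname{ch} M_{\overline{0}}(w\cdot\lambda-\sum_{\alpha\in S}\alpha)$ for varying $w$), and you make clear how the dominant case yields the statement for every $\mathfrak{g}_{-1}$-generic $\lambda$ via Lemma~\ref{lem:g-1-generic-properties}(5).
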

\begin{proof}
Let $\lambda \in \Lambda$ be dominant.
By \cref{lem:g-1-generic-properties}, the weight $\lambda$ is totally
disconnected. Hence, by \cref{thm:char-totally-disconnected} we have
\[
\begin{aligned}
  \operatorname{ch} L^{()}(\lambda)
  &= \operatorname{ch}\,\mathrm{Res}_{\mathfrak g_{\bar 0}}^{\mathfrak g} L^{()}(\lambda) \\
  &= \sum_{w \in W} (-1)^{\ell(w)}
    \frac{\operatorname{ch} M^{()}(w \cdot \lambda)}
         {\displaystyle\prod_{\beta \in \Gamma_{w \cdot \lambda}} (1 + e^{-\beta})} \\
 &=\sum_{w \in W} (-1)^{\ell(w)}
    \sum_{S \subset \Delta_{\overline{1}}^{()+} \setminus \Gamma_{w \cdot \lambda}}
      \operatorname{ch}
      M_{\overline{0}}
      \Bigl(w \cdot \lambda - \sum_{\alpha \in S} \alpha\Bigr).
      \end{aligned}
\]

On the other hand, by \cref{narrowBGG} we obtain
\[
  \operatorname{ch} L^{()}(\lambda)
  = \sum_{w \in W} (-1)^{\ell(w)}\,
    \operatorname{ch} N^{()}(w \cdot \lambda).
\]

By \cref{prop:Res-N-narrow-summand}, each
$\mathrm{Res}_{\mathfrak g_{\bar 0}}^{\mathfrak g} N^{()}(w \cdot \lambda)$ is a direct summand of
\[
  \bigoplus_{S \subset \Delta_{\overline{1}}^{()+} \setminus \Gamma_{w \cdot\lambda}}
      M_{\overline{0}}
      \Bigl(w \cdot \lambda - \sum_{\alpha \in S} \alpha\Bigr).
\]

Since the subspaces
$V_w:=\mathrm{span}\Bigl\{\operatorname{ch} M_{\overline{0}}\bigl(w\cdot\lambda-\sum_{\alpha\in S}\alpha\bigr)\ \Bigm|\ 
S\subset \Delta_{\overline{1}}^{()+}\setminus \Gamma_{w\cdot\lambda}\Bigr\}$
(for varying $w$) are linearly independent, this yields the desired identity and completes the proof.
\end{proof}

\begin{corollary}\label{cor:char-g-1-generic-antidom}
Let $\lambda \in \Lambda$ be $\mathfrak{g}_{-1}$-generic and
antidominant.
Then
\[
  \operatorname{ch} L^{()}(\lambda)
  = e^{\lambda}\,
     \frac{\displaystyle\prod_{\beta \in \Delta_{\overline{1}}^{()+} \setminus \Gamma_{w \cdot \lambda}}
                   (1 + e^{-\beta})}
          {\displaystyle\prod_{\gamma \in \Delta_{\overline{0}}^{+}}
                   (1 - e^{-\gamma})} \\
  = \frac{\displaystyle \operatorname{ch} M^{()}(\lambda)}
          {\displaystyle\prod_{\beta \in \Gamma_{w \cdot \lambda}} (1 + e^{-\beta})}.
\]
\end{corollary}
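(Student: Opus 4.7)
The plan is to reduce the corollary to the two results already established, namely \cref{thm:char-narrow} (character of $N^{()}(\lambda)$) and \cref{prop:socle-narrow} (socle of $N^{()}(\lambda)$), by showing that for antidominant $\lambda$ the narrow Verma module $N^{()}(\lambda)$ coincides with the simple module $L^{()}(\lambda)$.

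First, I would note the tautology that if $\lambda$ is antidominant then $\lambda^{\mathrm{antidom}}=\lambda$. Applying \cref{prop:socle-narrow} to $\lambda$ therefore yields
\[
  \operatorname{Soc} N^{()}(\lambda)
  \;\cong\;
  N^{()}(\lambda^{\mathrm{antidom}})
  \;=\;
  N^{()}(\lambda),
\]
so $N^{()}(\lambda)$ equals its own socle. By the same proposition this socle is isomorphic to the simple module $L^{()}(\lambda^{\mathrm{antidom}})=L^{()}(\lambda)$, whence $N^{()}(\lambda)\cong L^{()}(\lambda)$.

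Once this identification is in hand, the claimed character formula is immediate: since $\lambda$ is assumed $\mathfrak{g}_{-1}$-generic, \cref{thm:char-narrow} applies to give the character of $N^{()}(\lambda)$, and substituting $L^{()}(\lambda)$ for $N^{()}(\lambda)$ on the left-hand side completes the argument.

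There is essentially no obstacle here; the only point that requires a brief remark is that \cref{prop:socle-narrow} is stated for arbitrary integral $\lambda$, so no additional hypothesis (such as genericity) is needed for the socle identification. The genericity hypothesis enters only through \cref{thm:char-narrow} to justify the explicit product formula for $\operatorname{ch} N^{()}(\lambda)$.
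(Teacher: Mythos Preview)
Your proposal is correct and matches the paper's intended argument: the introduction explicitly notes that for antidominant $\lambda$ the narrow Verma module $N^{()}(\lambda)$ is simple, and the corollary is then immediate from \cref{thm:char-narrow}. Your use of \cref{prop:socle-narrow} to deduce $N^{()}(\lambda)\cong L^{()}(\lambda)$ when $\lambda=\lambda^{\mathrm{antidom}}$ is exactly how this simplicity is justified.
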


% 参考文献（小さめの文字で）
\begingroup
\small            % 好みで \footnotesize や \scriptsize でもOK
\bibliographystyle{plainnat}
\bibliography{references}

@book{musson2012lie,
  title={Lie superalgebras and enveloping algebras},
  author={Musson, Ian Malcolm},
  volume={131},
  year={2012},
  publisher={American Mathematical Soc.}
}

@article{chen2023some,
  title={Some homological properties of category for Lie superalgebras},
  author={Chen, Chih-Whi and Mazorchuk, Volodymyr},
  journal={Journal of the Australian Mathematical Society},
  volume={114},
  number={1},
  pages={50--77},
  year={2023},
  publisher={Cambridge University Press}
}

@article{bonfert2024weyl,
  title={The Weyl groupoids of \( \mathfrak{sl}(m|n) \) and \( \mathfrak{osp}(r|2n) \)},
  author={Bonfert, Lukas and Nehme, Jonas},
  journal={Journal of Algebra},
  volume={641},
  pages={795--822},
  year={2024},
  publisher={Elsevier}
}

@book{heckenberger2020hopf,
  title={Hopf algebras and root systems},
  author={Heckenberger, Istv{\'a}n and Schneider, Hans-J{\"u}rgen},
  volume={247},
  year={2020},
  publisher={American Mathematical Soc.}
}

@article{coulembier2017gorenstein,
  title={Gorenstein homological algebra for rngs and Lie superalgebras},
  author={Coulembier, Kevin},
  journal={arXiv preprint arXiv:1707.05040},
  year={2017}
}

@article{heckenberger2008generalization,
  title={A generalization of Coxeter groups, root systems, and Matsumoto’s theorem},
  author={Heckenberger, Istv{\'a}n and Yamane, Hiroyuki},
  journal={Mathematische Zeitschrift},
  volume={259},
  pages={255--276},
  year={2008},
  publisher={Springer}
}

@article{andruskiewitsch2017finite,
  title={On finite dimensional Nichols algebras of diagonal type},
  author={Andruskiewitsch, Nicol{\'a}s and Angiono, Iv{\'a}n},
  journal={Bulletin of Mathematical Sciences},
  volume={7},
  pages={353--573},
  year={2017},
  publisher={Springer}
}

@article{bell1993theory,
  title={On the theory of Frobenius extensions and its application to Lie superalgebras},
  author={Bell, Allen D and Farnsteiner, Rolf},
  journal={Transactions of the American Mathematical Society},
  volume={335},
  number={1},
  pages={407--424},
  year={1993}
}

@incollection{brundan2014representations,
  title={Representations of the general linear Lie superalgebra in the BGG category ��},
  author={Brundan, Jonathan},
  booktitle={Developments and Retrospectives in Lie Theory: Algebraic Methods},
  pages={71--98},
  year={2014},
  publisher={Springer}
}

@article{stroppel2012highest,
  title={Highest weight categories arising from Khovanov's diagram algebra IV: the general linear supergroup},
  author={Brundan, Jonathan and Stroppel, Catharina },
  journal={Journal of the European Mathematical Society},
  volume={14},
  number={2},
  pages={373--419},
  year={2012}
}

@article{coulembier2016primitive,
  title={Primitive ideals, twisting functors and star actions for classical Lie superalgebras},
  author={Coulembier, Kevin and Mazorchuk, Volodymyr},
  journal={Journal f{\"u}r die reine und angewandte Mathematik},
  volume={2016},
  number={718},
  pages={207--253},
  year={2016},
  publisher={De Gruyter}
}

@incollection{mazorchuk2014parabolic,
  title={Parabolic category \( \mathcal{O} \) for classical Lie superalgebras},
  author={Mazorchuk, Volodymyr},
  booktitle={Advances in Lie Superalgebras},
  pages={149--166},
  year={2014},
  publisher={Springer}
}

@book{etingof2015tensor,
  title={Tensor categories},
  author={Etingof, Pavel and Gelaki, Shlomo and Nikshych, Dmitri and Ostrik, Victor},
  volume={205},
  year={2015},
  publisher={American Mathematical Soc.}
}

@article{chen2020primitive,
  title={The primitive spectrum and category for the periplectic Lie superalgebra},
  author={Chen, Chih-Whi and Coulembier, Kevin},
  journal={Canadian Journal of Mathematics},
  volume={72},
  number={3},
  pages={625--655},
  year={2020},
  publisher={Canadian Mathematical Society}
}

@article{chen2021translated,
  title={Translated simple modules for Lie algebras and simple supermodules for Lie superalgebras},
  author={Chen, Chih-Whi and Coulembier, Kevin and Mazorchuk, Volodymyr},
  journal={Mathematische Zeitschrift},
  volume={297},
  number={1},
  pages={255--281},
  year={2021},
  publisher={Springer}
}

@article{brundan2003kazhdan,
  title={Kazhdan-Lusztig polynomials and character formulae for the Lie superalgebra ���� (��| ��)},
  author={Brundan, Jonathan},
  journal={Journal of the American Mathematical Society},
  volume={16},
  number={1},
  pages={185--231},
  year={2003}
}

@article{chen2021simple,
  title={Simple supermodules over Lie superalgebras},
  author={Chen, Chih-Whi and Mazorchuk, Volodymyr},
  journal={Transactions of the American Mathematical Society},
  volume={374},
  number={2},
  pages={899--921},
  year={2021}
}

@article{su2007character,
  title={Character and dimension formulae for general linear superalgebra},
  author={Su, Yucai and Zhang, RB},
  journal={Advances in Mathematics},
  volume={211},
  number={1},
  pages={1--33},
  year={2007},
  publisher={Elsevier}
}

@article{chmutov2014weyl,
  title={A Weyl-type character formula for PDC modules of gl (m| n)},
  author={Chmutov, Michael and Hoyt, Crystal and Reif, Shifra},
  journal={arXiv preprint arXiv:1407.0198},
  year={2014}
}

@article{germoni1998indecomposable,
  title={Indecomposable representations of special linear Lie superalgebras},
  author={Germoni, J{\'e}r{\^o}me},
  journal={Journal of Algebra},
  volume={209},
  number={2},
  pages={367--401},
  year={1998},
  publisher={Elsevier}
}

@article{chen2021tilting,
  title={Tilting modules for classical Lie superalgebras},
  author={Chen, Chih-Whi and Cheng, Shun-Jen and Coulembier, Kevin},
  journal={Journal of the London Mathematical Society},
  volume={103},
  number={3},
  pages={870--900},
  year={2021},
  publisher={Wiley Online Library}
}

@article{coulembier2016bott,
  title={Bott--Borel--Weil theory and Bernstein--Gel’fand--Gel’fand reciprocity for Lie superalgebras},
  author={Coulembier, Kevin},
  journal={Transformation Groups},
  volume={21},
  number={3},
  pages={681--723},
  year={2016},
  publisher={Springer}
}

@article{penkov1994generic,
  title={Generic representations of classical Lie superalgebras and their localization},
  author={Penkov, Ivan},
  journal={Monatshefte f{\"u}r Mathematik},
  volume={118},
  number={3},
  pages={267--313},
  year={1994},
  publisher={Springer}
}

@inproceedings{penkov1989cohomology,
  title={Cohomology of $ G/P $ for classical complex Lie supergroups $ G $ and characters of some atypical $ G $-modules},
  author={Penkov, Ivan and Serganova, Vera},
  booktitle={Annales de l'institut Fourier},
  volume={39},
  number={4},
  pages={845--873},
  year={1989}
}

@article{chmutov2015kac,
  title={The Kac--Wakimoto character formula for the general linear Lie superalgebra},
  author={Chmutov, Michael and Hoyt, Crystal and Reif, Shifra},
  journal={Algebra \& Number Theory},
  volume={9},
  number={6},
  pages={1419--1452},
  year={2015},
  publisher={Mathematical Sciences Publishers}
}

@article{serganova1996kazhdan,
  title={Kazhdan-Lusztig polynomials and character formula for the Lie superalgebra gI (m/n)},
  author={Serganova, Vera},
  journal={Selecta Mathematica},
  volume={2},
  number={4},
  pages={607},
  year={1996},
  publisher={Springer}
}

@article{gorelik2023gruson,
  title={Gruson--Serganova character formulas and the Duflo--Serganova cohomology functor},
  author={Gorelik, Maria and Heidersdorf, Thorsten},
  journal={Journal f{\"u}r die reine und angewandte Mathematik (Crelles Journal)},
  volume={2023},
  number={798},
  pages={1--54},
  year={2023},
  publisher={De Gruyter}
}

@article{gorelik2015characters,
  title={Characters of (relatively) integrable modules over affine Lie superalgebras},
  author={Gorelik, Maria and Kac, Victor G},
  journal={Japanese Journal of Mathematics},
  volume={10},
  number={2},
  pages={135--235},
  year={2015},
  publisher={Springer}
}

@article{cheng2017kac,
  title={Kac--Wakimoto character formula for ortho-symplectic Lie superalgebras},
  author={Cheng, Shun-Jen and Kwon, Jae-Hoon},
  journal={Advances in Mathematics},
  volume={304},
  pages={1296--1329},
  year={2017},
  publisher={Elsevier}
}

@incollection{kac1994integrable,
  title={Integrable highest weight modules over affine superalgebras and number theory},
  author={Kac, Victor G and Wakimoto, Minoru},
  booktitle={Lie Theory and Geometry: In Honor of Bertram Kostant},
  pages={415--456},
  year={1994},
  publisher={Springer}
}

@article{brundan2010highestII,
  title={Highest weight categories arising from Khovanov's diagram algebra II: Koszulity},
  author={Brundan, Jonathan and Stroppel, Catharina},
  journal={Transformation groups},
  volume={15},
  number={1},
  pages={1--45},
  year={2010},
  publisher={Springer}
}

@article{heckenberger2009classification,
  title={Classification of arithmetic root systems},
  author={Heckenberger, Istv{\'a}n},
  journal={Advances in Mathematics},
  volume={220},
  number={1},
  pages={59--124},
  year={2009},
  publisher={Elsevier}
}

@article{coulembier2014bernstein,
  title={Bernstein--Gelfand--Gelfand resolutions for basic classical Lie superalgebras},
  author={Coulembier, Kevin},
  journal={Journal of Algebra},
  volume={399},
  pages={131--169},
  year={2014},
  publisher={Elsevier}
}

@article{cheng2008bgg,
  title={A BGG-type resolution for tensor modules over general linear superalgebra},
  author={Cheng, Shun-Jen and Kwon, Jae-Hoon and Lam, Ngau},
  journal={Letters in Mathematical Physics},
  volume={84},
  number={1},
  pages={75--87},
  year={2008},
  publisher={Springer}
}

@article{bgg1973schubert,
  author  = {Bernstein, I. N. and Gelfand, I. M. and Gelfand, S. I.},
  title   = {Schubert cells and cohomology of the spaces $G/P$},
  journal = {Russian Math. Surveys},
  volume  = {28},
  number  = {3},
  pages   = {1--26},
  year    = {1973}
}

@article{sergeev2024combinatoricsirreduciblecharacterslie,
      title={Combinatorics of irreducible characters for Lie superalgebra $\frak{gl}(m,n)$}, 
      author={Alexander Sergeev},
      year={2024},
      eprint={2401.12534},
      archivePrefix={arXiv},
      primaryClass={math.RT},
      url={https://arxiv.org/abs/2401.12534}, 
}
\endgroup

\noindent
\textsc{Shunsuke Hirota} \\
\textsc{Department of Mathematics, Kyoto University} \\
Kitashirakawa Oiwake-cho, Sakyo-ku, 606-8502, Kyoto \\
\textit{E-mail address}: \href{shun299509732@gmail.com}{shun299509732@gmail.com}

\end{document}